\newcommand{\dx}{\, \text{\textnormal{d}}}
\newcommand{\la}{\langle}
\newcommand{\R}{\mathbb{R}}
\newcommand{\eps}{\varepsilon}
\newcommand{\leb}{\mathcal{L}}
\renewcommand{\phi}{\varphi}
\newcommand{\ind}{\mathbbm{1}}
\renewcommand{\bar}{\overline}
\renewcommand{\tilde}{\widetilde}
\newcommand{\Lip}{\operatorname{Lip}}
\newcommand{\supp}{\operatorname{supp}}
\newcommand{\id}{\text{id}}
\newtheorem{theorem}{Theorem}[section]
\newtheorem{proposition}{Proposition}[section]
\newtheorem{definition}{Definition}[section]
\newtheorem{lemma}{Lemma}[section]
\DeclareMathOperator*{\smallo}{o}
\newcommand{\dint}{\displaystyle\int}
\newcommand{\dsum}{\displaystyle\sum}
\newcommand{\Ent}{\operatorname{Ent}}
\renewcommand{\O}{\mathcal{O}}
\newcommand{\sm}{\text{\normalfont small}}
\renewcommand{\la}{\text{\normalfont large}}
\renewcommand{\O}{\mathcal{O}}
\newcommand{\e}{\text{\textnormal e}}
\newcommand{\res}{\mathop{\hbox{\vrule height 7pt width .5pt depth 0pt
\vrule height .5pt width 6pt depth 0pt}}\nolimits}
\begin{document}

\title[Entropic regularization of the Monge problem on the real line]{The entropic regularization of the Monge problem \\ on the real line}
\author{Simone Di Marino}
\address{Indam, Scuola Normale Superiore, Piazza dei Cavalieri 7, 56126 Pisa PI, Italy}
\email{simone.dimarino@altamatematica.it}
\author{Jean Louet}
\address{Ceremade, CNRS, UMR 7534, Universit\'e Paris-Dauphine, PSL Research University, 75016 Paris, France}
\address{Inria-Paris, Mokaplan, 2 rue Simone Iff, 75012 Paris, France}
\email{louet@ceremade.dauphine.fr}

\begin{abstract}
    We study the entropic regularization of the optimal transport problem in dimension~1 when the cost function is the distance $c(x,y)=|y-x|$. The selected plan at the limit is, among those which are optimal for the nonpenalized problem, the most ``diffuse'' one on the zones where it may have a density.
\end{abstract}

\maketitle

\section{Introduction}

In this paper, we are concerned with the following approximation of the optimal transportation problem: given two probability measures~$\mu,\nu$ on $\R^d$ and $\eps>0$, find the minimizer of
\begin{equation} \label{EntRegIntro} J_\eps : \gamma \mapsto \iint |y-x|\dx\gamma(x,y) + \eps \iint \log \dfrac{\text{d}\gamma}{\text{d}(\mu\otimes\nu)} \dx\gamma  \end{equation}
among all the measures $\gamma$ on $\R^d\times\R^d$ which have a density with respect to $\mu\otimes\nu$ and whose first and second marginals are equal to $\mu$ and $\nu$, respectively. At the limit $\eps\to 0$, we expect the minimizer $\gamma_\eps$ to converge to an optimal measure for the energy $\int|y-x|\dx\gamma$ with prescribed marginals, and our goal is to understand which one is selected. \medskip

The corresponding problem with $\eps=0$ is the ``distance cost'' version of the classical optimal transport problem, whose original formulation, due to Monge in the 18th~century~\cite{Monge}, consists in looking for the map $T:\R^d\to\R^d$ which minimizes
$$ \int_{\R^d} c(x,T(x))\dx\mu(x), $$
where $c$ is a given positive function and $T$ must satisfy the constraint
\begin{equation} \text{for any Borel set } B\subset \R^d,\; \mu(T^{-1}(B)) = \nu(B). \label{immes} \end{equation}
Due to the difficulty of this image-measure constraint, this problem remained quite difficult to solve for many years. A suitable relaxation, which corresponds to our problem~\eqref{EntRegIntro} with no penalization, was introduced by Kantorovich in the 1940s~\cite{Kan1,Kan2}, namely, the minimization problem:
\begin{equation} \inf\left\{ \iint_{\R^d\times\R^d} c(x,y) \dx\gamma(x,y) \,:\, \gamma\in \Pi(\mu,\nu) \right\}, \label{kant} \end{equation}
where $\Pi(\mu,\nu)$ is the set of probability measures $\gamma$ on $\R^d\times\R^d$ having $\mu$ and $\nu$ as marginals, that is,
\begin{equation} \label{marginals} \text{for any Borel set } B \subset \R^d, \qquad \gamma(B\times\R^d) = \mu(B) \quad\text{and}\quad \gamma(\R^d\times B) = \nu(B). \end{equation}
The problem~\eqref{kant} is a generalization of the Monge's problem as, from any map $T$ satisfying the above measure constraint, it is easy to build a measure $\gamma \in \Pi(\mu,\nu)$ which is concentrated on the graph of~$T$ and has same total energy; moreover, due to the compactness properties of $\Pi(\mu,\nu)$ for the weak topology of measures, proving the existence of solutions of~\eqref{kant} is easy by the direct methods of the calculus of variations. Thanks to a suitable convex duality argument, it is possible to prove~\cite{BreCRAS,BrePolar,GanMcc}, under suitable assumptions on the data and the cost function $c$ (which include the case $c = |y-x|^p$ for $1 < p <+\infty$), that the problem~\eqref{kant} admits a unique solution which~is induced by a map~$T$, yielding optimality of this map for the original Monge problem. In the case $c=|y-x|$, the existence results are more recent and the uniqueness is not guaranteed anymore; cf.~\cite{AmbLN}. We refer to the monographs~\cite{AGS,Vi1,Vi2,SanOTAM} for a complete overview of the optimal transportation theory.

Although it was already present in much earlier works, the penalization~\eqref{EntRegIntro} has been recently reintroduced for numerical reasons. Indeed, computing numerically the optimal transport map~$T$ remained for a long time a very challenging problem; a~first major achievement appeared in the beginning of the 2000s, when Benamou and Brenier~\cite{BenBre} introduced the so-called ``dynamical formulation'' (based on the minimization of the kinetic energy among the curves of measures and velocity fields satisfying a mass conservation equation), which can be solved by an augmented Lagragian method after a convex change of variables. Let us also mention the algorithms due to Angenent, Hacker, and Tannenbaum~\cite{AngHakTan}, which also rely on fluid-mechanics formulations, and more recently the approaches by discretization of the Monge--Amp\`ere equation~\cite{BenCarMerOud, bcm16} or via semidiscrete optimal transport~\cite{merigot2011,levy2015}.  \smallskip

The numerical interest of the approaches similar to~\eqref{EntRegIntro} has been shown in the last few years. The general idea consists in perturbing the Kantorovich problem by the so-called ``entropy functional'' defined as
$$ \Ent(\gamma|\rho) = \left\{\begin{array}{ll} \dint \log\left(\dfrac{\text{d}\gamma}{\text{d}\rho}\right) \dx\gamma & \text{if } \gamma \ll \rho, \\[2mm] +\infty & \text{otherwise.} \end{array} \right. $$
and in focusing, given a suitable small parameter $\eps>0$, on the problem
\begin{equation} \inf\left\{ \iint c(x,y)\dx\gamma(x,y) + \eps \Ent(\gamma|\mu\otimes\nu) \,:\, \gamma\in\Pi(\mu,\nu)\right\}. \label{entrkant} \end{equation}
Among other properties, the function $\Ent(\cdot|\mu\otimes\nu)$ enforces $\gamma$ to be an absolutely continuous measure with respect to the tensor product $\mu\otimes\nu$ and favors such transport plans which are {\it as diffuse as possible} (on the set $\Pi(\mu,\nu)$, its unique minimizer is $\mu\otimes\nu$). The idea of the entropic penalization actually goes back to Schr\"odinger's works~\cite{Schr}; moreover, the algebraic properties of the entropy functional and of the dual problem of~\eqref{entrkant} make the numerical computation of its solution much easier, thanks to the so-called Sinkhorn's algorithm involving alternated projections. We refer to the papers~\cite{Cut,BenCarCutNenPey} and the Ph.D.~thesis~\cite{NenPHD} for more details on the theoretical and numerical properties of this class of algorithms. \smallskip

From a theoretical point of view, the convergence of the solution of~\eqref{entrkant} as $\eps\to 0$ has been recently proven in~\cite{CarDuvPeySch} (we also mention~\cite{Leo1,Leo2}, in which similar problems are studied  in a much more abstract framework): therein, the authors showed that the family of functionals
\begin{equation} \gamma \in \Pi(\mu,\nu) \mapsto \int c\dx\gamma + \eps \Ent(\gamma|\mu\otimes\nu) \label{energieeps} \end{equation}
is $\Gamma$-converging, as $\eps\to 0$, to the transport energy $\gamma\mapsto \int c\dx\gamma$. Therefore, when $c$ is one of the costs for which there exists a unique optimal plan $\gamma$ for the Kantorovich problem~\eqref{kant}, the family $(\gamma_\eps)_\eps$ of minimizers of~\eqref{energieeps} converges to~$\gamma$. \medskip

In this paper, we are interested in the following theoretical question: {\it What's happening if there are several minimal plans $\gamma$ for the cost function~$c$?} As usual in minimization problems of penalized functionals, it is natural to guess that, when the set~$\mathcal{O}_c$ of optimal plans for cost function~$c$ has at least two elements, the family~$(\gamma_\eps)_\eps$ of minimizers of $J_\eps$ will converge to the ``most diffuse'' element of $\mathcal{O}_c$, namely, the one which minimizes the entropy functional $E(\cdot|\mu\otimes\nu)$. However, the existence of a plan belonging to~$\mathcal{O}_C$ and having {\it finite entropy} is far from clear, and is generally false depending on the cost function~$c$ and the data.

We here focus on the case where $c(x,y) = |y-x|$, which was the original cost function proposed by Monge. This cost function is certainly the most studied one for which it is known that the uniqueness of optimal plan and optimal map fails, and selecting a particular optimal plan by adding a ``regularizing'' term to the energy is quite common: this is classically done through strictly convex costs which brings to the {\it monotone transport}; see, for instance, \cite{CafFelMcC,FelMcC,TruWan} for existence of optimal maps and \cite{LiSanWan} for very partial regularity results (we also mention the quite different approximation proposed in~\cite{DepLouSan}, where the ``regularizing'' term enforces the transport plan to be induced by a regular map). In the present paper, we concentrate on the entropic regularization of this problem where the measures are supported on the real line. In higher dimension, let us just notice that the structure of optimal Monge's plans is much more complicated, involving the geometric notion of {\it transport rays} (see, for instance,~\cite{AmbLN} or \cite[Chapter~3]{SanOTAM}), and although one may guess that the ``most diffuse transport plans on each ray'' will be selected, attacking this regularization problem on the Euclidean space would probably require very different techniques from ours. \medskip

In the one-dimensional case, the contributions and results of this paper are the following:
\begin{itemize}
\item First of all, we need a complete description of the set of one-dimensional optimal transport plans for the distance cost $c(x,y)=|y-x|$; although this result is natural and its proof is not complicated, it was not present in the literature at that time to the best of our knowledge. The structure of optimal plans is described in Proposition~\ref{1DOptPlans} and can be summarized as follows: denoting by~$T$ the monotone rearrangement (that is, the unique non-decreasing transport map) between~$\mu$ to~$\nu$, the optimal plans~$\gamma$ are those such that if $(x,y)$ is in the support of $\gamma$, then
\begin{itemize}
\item if $T(x)=x$, then $y$ must be exactly equal to $x$;
\item if $x$ belongs to some interval where $T-\id >0$, then $y$ must belong to the same interval and be larger than $x$;
\item the analogous constraint holds where $T-\id<0$.
\end{itemize}
In particular, any optimal transport plan can be decomposed into a ``bad plan'' which is concentrated on the graph on the identity map (and therefore is singular with respect to the Lebesgue measure) and a ``possibly good plan'' which must only respect the sign of $T-\id$ and is allowed to have a density.
\item Once this structural result is stated, our main result studies the $\Gamma$-convergence of the penalized functional. Actually, we need to consider the ``rescaled functional''
$$ F_\eps : \gamma \mapsto \frac{1}{\eps} \left(\int |y-x|\dx\gamma-W_1(\mu,\nu)\right) + \Ent(\gamma|\mu\otimes\nu)-\mu(A)|\log(2\eps)|, $$
where $A$ is the set $\{ x\in\R : T(x)=x \}$ and $W_1(\mu,\nu)$ the minimal transport energy for the distance cost. For fixed $\eps$, the functional $F_\eps$ admits exactly the same minimizers as~$J_\eps$. In Theorem~\ref{maintheo} we prove, under technical assumptions on the data, that its $\Gamma$-limit is $+\infty$ outside of the set of optimal plans, and that, in this set, it is equal to
$$ F: \gamma \mapsto \Ent(\gamma\res(\R\setminus A)^2 | \mu\otimes\nu) + C, $$
where the constant $C$ only depends on the data. In particular, the unique minimizer of $F$ is the plan which is optimal for distance cost and whose ``good part'' is ``as diffuse as possible'', as we could naturally expect. Moreover, this result implies the asymptotic expansion
$$ \min J_\eps = W_1(\mu,\nu) + \mu(A)\eps|\log(2\eps)| + \eps \min F + \smallo(\eps); $$
notice that the excess of order $\eps|\log\eps|$ is a common phenomenon with the penalization of the Monge problem proposed in~\cite{DepLouSan}.
\item Finally, Section~\ref{LastSection} studies the explicit form of the optimal plan which is selected at the limit, i.e.,~the minimizer of~$F$; the precise result is stated in Theorem~\ref{maintheo2}, which also gives a necessary and sufficient condition for $F$ to be not identically $+\infty$. This section relies on entropy-minimization problems of the ``good part of optimal plans'', which can be rewritten
  $$ \min\Big\{ \Ent\big(\gamma\,|\, (\mu(x)\otimes\nu(y))\res(I\times I) \cdot \ind_{y\geq x}\Big\} $$
  on maximal positivity intervals $I$ of $T-\id$ (and as analogous minimization problems on their negative counterparts). Similar problems were already deeply analyzed in~\cite{Borwein1,Borwein2}, and we here provide new self-contained proofs more suitable for our needs.
\end{itemize}

\section{Notation and preliminary results}

In this section, we collect the notation and well-known facts on measure theory and optimal transportation which will be used throughout the paper.

Let $X$ and $Y$ be two Polish spaces (in this paper, we will only focus on the case $X=Y=\R$), and let $\mu$, $\nu$ be two positive measures on $X$, $Y$ whose total masses are finite and equal. We denote by $\Pi(\mu,\nu)$ the set of {\it transport plans} from $\mu$ to $\nu$, that is, the set of positive measures on $X\times Y$ satisfying~\eqref{marginals}; recall that this constraint can be reformulated as follows: for any $(\phi,\psi) \in C_b(X) \times C_b(Y)$,
$$ \iint \phi(x)\dx\gamma(x,y) = \int\phi\dx\mu \quad \text{and}\quad \iint\psi(y)\dx\gamma(x,y) = \int\psi\dx\nu. $$
When $T:X\to Y$ is a map, we call it a {\it transport map} from $\mu$ to $\nu$ if it satisfies~\eqref{immes}. This is equivalent to saying that the measure $\gamma_T$ defined as
$$ \text{for any } f \in C_b(X\times Y),\; \iint f\dx\gamma_T = \int f(x,T(x))\dx\mu(x) $$
belongs to $\Pi(\mu,\nu)$. Equivalently, this means that the equality $\int \phi\circ T\dx\mu = \int \phi\dx\nu$ holds for any $\phi \in C_b(X)$. A useful property of $\Pi(\mu,\nu)$ is the following.

\begin{proposition} Let $X,Y$ be Polish spaces, and let $\mu,\nu$ be positive measures on $X,Y$ with finite mass. Then the set $\Pi(\mu,\nu)$ is compact with respect to the narrow topology of measures. \end{proposition}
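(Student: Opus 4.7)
The plan is to combine Prokhorov's theorem with closedness under narrow convergence. First I would establish tightness of the family $\Pi(\mu,\nu)$. Since $\mu$ and $\nu$ are finite Borel measures on Polish spaces, each of them is tight individually: for every $\eta>0$, there exist compact sets $K_X\subset X$ and $K_Y\subset Y$ with $\mu(X\setminus K_X)<\eta/2$ and $\nu(Y\setminus K_Y)<\eta/2$. Then $K_X\times K_Y$ is compact in $X\times Y$, and for any $\gamma\in\Pi(\mu,\nu)$, the marginal inequality
$$ \gamma\big((X\times Y)\setminus(K_X\times K_Y)\big) \le \gamma\big((X\setminus K_X)\times Y\big) + \gamma\big(X\times(Y\setminus K_Y)\big) = \mu(X\setminus K_X) + \nu(Y\setminus K_Y) < \eta $$
holds uniformly in $\gamma$. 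This gives tightness, hence, by Prokhorov's theorem, relative compactness in the narrow topology.

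Next I would show that $\Pi(\mu,\nu)$ is narrowly closed. Take any sequence $(\gamma_n)\subset\Pi(\mu,\nu)$ narrowly converging to some finite measure $\gamma$ on $X\times Y$. For $\phi\in C_b(X)$, the function $(x,y)\mapsto \phi(x)$ is continuous and bounded on $X\times Y$, so by the definition of narrow convergence
$$ \iint \phi(x)\dx\gamma(x,y) = \lim_n \iint \phi(x)\dx\gamma_n(x,y) = \int \phi\dx\mu, $$
using the marginal characterization recalled before the statement. The same argument applied to test functions of the form $(x,y)\mapsto\psi(y)$ with $\psi\in C_b(Y)$ shows that the second marginal of $\gamma$ is $\nu$. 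Therefore $\gamma\in\Pi(\mu,\nu)$, which proves closedness.

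Combining relative compactness and closedness yields compactness of $\Pi(\mu,\nu)$ in the narrow topology. The only delicate point is the tightness step, which requires remembering that finite Borel measures on Polish spaces are automatically tight (Ulam's theorem); everything else is a direct application of the definition of narrow convergence on the product space.
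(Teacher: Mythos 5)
Your argument is correct and is the standard proof of this classical fact; the paper itself states the proposition without proof, citing it as well known. You correctly invoke Ulam's theorem for tightness of each marginal, lift it to uniform tightness on the product via the elementary estimate $\gamma\big((X\times Y)\setminus(K_X\times K_Y)\big) \le \mu(X\setminus K_X)+\nu(Y\setminus K_Y)$, apply Prokhorov (noting that all elements of $\Pi(\mu,\nu)$ share the same finite total mass, so the uniform boundedness hypothesis is automatic), and verify closedness by testing against functions depending on one variable only. One minor point worth making explicit: passing from relative compactness to sequential compactness is justified here because the narrow topology on finite measures of uniformly bounded mass on a Polish space is metrizable, so the sequential formulation of Prokhorov's theorem applies.
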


We now introduce the definition of {\it relative entropy}.

\begin{definition} Let $\rho$ be a fixed positive measure on $\R^d$, $d\geq 1$. For any positive measure $\gamma$ on $\R^d$, we set
$$ \Ent(\gamma|\rho) := \left\{\begin{array}{ll} \dint \log\left(\dfrac{\text{\normalfont d}\gamma}{\text{\normalfont d}\rho}\right) \dx\gamma & \text{if } \gamma \ll \rho, \\[2mm] +\infty & \text{otherwise,} \end{array} \right.  $$
  and the functional $\Ent(\cdot|\rho)$ is called {\em relative entropy with respect to $\rho$.} \end{definition}

The following properties of the functional $\Ent$ are classical (see, for instance, \cite[Theorem~2.34]{AmbFusPal}).

\begin{proposition} Let $\rho$ be a fixed positive measure on $\R^d$, $d\geq 1$. Then the functional $\Ent(\cdot|\rho)$ is strictly convex, and it is lower semicontinuous with respect to the narrow convergence of measures. \end{proposition}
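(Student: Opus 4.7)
The plan is to treat the two assertions separately, in both cases relying on properties of the convex function $h(t) := t\log t$ (extended by $h(0)=0$) on $[0,+\infty)$. For strict convexity I would use the pointwise strict convexity of $h$ on $[0,+\infty)$. Let $\gamma_0,\gamma_1$ be two distinct measures, both absolutely continuous with respect to $\rho$ (otherwise the right-hand side of the convexity inequality equals $+\infty$ and there is nothing to prove), and write their densities as $f_0,f_1$. For $t\in(0,1)$, the density of the interpolant $\gamma_t:=(1-t)\gamma_0+t\gamma_1$ is $f_t=(1-t)f_0+tf_1$, and pointwise
$$ h(f_t)\le (1-t)h(f_0)+t\,h(f_1) $$
with strict inequality on $\{f_0\ne f_1\}$. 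Since $\gamma_0\ne\gamma_1$ this set has positive $\rho$-measure, so integrating against $\rho$ yields the desired strict inequality between entropies.

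For lower semicontinuity, my approach is to exhibit $\Ent(\cdot|\rho)$ as a supremum of functionals that are continuous for narrow convergence, via the Legendre conjugate of $h$, which is $h^*(\phi)=e^{\phi-1}$. The pointwise inequality $s\log s\ge s\phi - e^{\phi-1}$ (valid for all $s\ge 0$ and $\phi\in\R$), applied at $s=\text{d}\gamma/\text{d}\rho$ and integrated against $\rho$, yields
$$ \Ent(\gamma|\rho)\ge \int\phi\dx\gamma-\int e^{\phi-1}\dx\rho \qquad \text{for every }\phi\in C_b(\R^d). $$
The right-hand side is narrowly continuous in $\gamma$ (the first term because $\phi$ is bounded continuous, the second being independent of $\gamma$), so the pointwise supremum over $\phi\in C_b(\R^d)$ is lower semicontinuous as a supremum of continuous functionals. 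It then suffices to show that this supremum actually equals $\Ent(\gamma|\rho)$ to conclude.

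The technical heart of the proof lies in this last identification: the natural Legendre maximizer $\phi=1+\log(\text{d}\gamma/\text{d}\rho)$ is in general neither bounded nor continuous, so it has to be approximated by bounded continuous functions. The standard route is to truncate $\log(\text{d}\gamma/\text{d}\rho)$ at levels $\pm N$, convolve with a mollifier on $\R^d$, and pass to the limit using monotone and dominated convergence, taking care of the set where the density vanishes and, if $\rho$ is not finite, of behaviour at infinity. Once this duality is in place the lower semicontinuity is immediate, and as a byproduct one also recovers the (non-strict) convexity already obtained by the direct pointwise argument of the first paragraph.
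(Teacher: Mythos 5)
The paper does not actually prove this proposition---it simply cites \cite[Theorem~2.34]{AmbFusPal}---so there is no proof of the paper's to compare against; you are supplying a self-contained argument. Both halves of your proposal take the standard, correct route, and I see nothing structurally wrong, but two points deserve a remark.

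On strict convexity: your pointwise argument is correct, but the conclusion ``integrating a pointwise-strict inequality gives a strictly smaller integral'' is only meaningful when the right-hand side is finite. You dispose of the non-absolutely-continuous case, but you should also explicitly wave away the case where $\gamma_0,\gamma_1$ are both absolutely continuous yet both have infinite entropy: there again both sides of the convexity inequality equal $+\infty$ and there is nothing to prove. This is a cosmetic remark, but worth stating since ``strict convexity'' for an extended-real-valued functional is conventionally understood on the interior of its effective domain.

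On lower semicontinuity: the Legendre-duality scheme is the standard one and your outline is sound, but the precise formula you wrote has a genuine snag when $\rho$ is not a finite measure, which the statement nominally allows. If $\rho(\R^d)=+\infty$, then $\int e^{\phi-1}\dx\rho=+\infty$ for every $\phi\in C_b(\R^d)$, so the functional $\gamma\mapsto\int\phi\dx\gamma-\int e^{\phi-1}\dx\rho$ is identically $-\infty$ and the supremum is vacuous. The usual remedy is to take the Legendre conjugate of $g(t)=t\log t-t+1$ (so that $g^*(\phi)=e^{\phi}-1$) and to restrict the supremum to $\phi\in C_c(\R^d)$: then $e^{\phi}-1$ is compactly supported and the dual term is finite for any locally finite $\rho$, while the truncation-and-mollification approximation you sketch still identifies the supremum with the entropy. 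In the only situation the paper uses this proposition---$\rho=\mu\otimes\nu$ with $\mu,\nu$ probability measures, hence $\rho$ finite---your formula works verbatim, and the approximation step you outline (truncate the log-density, mollify, pass to the limit) is exactly what closes the argument.
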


In this paper, we are interested in the optimal transport problem when $c$ is given by the distance $c(x,y)=|y-x|$. In that case, the so-called ``duality formula of the optimal transport problem'' takes the following form.

\begin{theorem} \label{theodual} Given $\mu,\nu \in \mathcal{P}(\R^d)$, we have the equality
\begin{equation} \label{PbDual} \inf\left\{ \int |y-x|\dx\gamma(x,y) \,:\, \gamma\in\Pi(\mu,\nu) \right\} = \sup\left\{ \int u\dx\nu-\int u\dx\mu \,:\, u \in \Lip_1(\R^d) \right\}, \end{equation}
where $\Lip_1(\R^d)$ stands for the set of Lipschitz functions on $\R^d$ having Lipschitz constant at most~$1$. The common optimal value will be denoted by $W_1(\mu,\nu)$ and the set of optimal transport plans by $\O_1(\mu,\nu)$; a maximizer of the dual problem will be called a {\em Kantorovich potential}.

As a consequence of \eqref{PbDual}, given a function $u\in\Lip_1(\R^d)$ and a transport plan $\gamma\in\Pi(\mu,\nu)$, the following properties are equivalent:
\begin{enumerate}[label=(\roman*)]
\item $u$ is a Kantorovich potential and $\gamma\in\O_1(\mu,\nu)$;
\item for $\gamma$-a.e.~$(x,y)$, $|y-x| = u(y)-u(x)$.
\end{enumerate}
\end{theorem}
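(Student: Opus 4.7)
The plan is to split the theorem into the duality equality \eqref{PbDual} and the equivalence (i)$\Leftrightarrow$(ii), treating them in this order and exploiting the metric nature of the cost $c(x,y)=|y-x|$ to reduce the dual variables to a single 1-Lipschitz function.

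For \eqref{PbDual}, the easy inequality $\sup\leq\inf$ is immediate: given $u\in\Lip_1(\R^d)$ and $\gamma\in\Pi(\mu,\nu)$, the 1-Lipschitz bound $u(y)-u(x)\leq|y-x|$ holds pointwise, and integrating against $\gamma$ together with the marginal conditions yields
$$ \int u\dx\nu - \int u \dx\mu = \iint (u(y)-u(x))\dx\gamma(x,y) \leq \iint |y-x| \dx\gamma. $$
For the reverse inequality I would invoke the general Kantorovich duality theorem for lower semicontinuous costs on Polish spaces (as stated e.g.\ in~\cite{Vi2,SanOTAM}), which gives
$$ \inf_{\gamma\in\Pi(\mu,\nu)} \iint |y-x|\dx\gamma = \sup\left\{ \int\phi\dx\mu + \int\psi\dx\nu \,:\, \phi(x)+\psi(y)\leq|y-x| \right\}, $$
and then reduce the admissible pairs. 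Given any admissible $(\phi,\psi)$, the function
$$ u(y) := \inf_{x\in\R^d}\bigl(|y-x|-\phi(x)\bigr) $$
is 1-Lipschitz, as an infimum of translates of $|\cdot|$. Taking $x=y$ shows $u(y)\leq-\phi(y)$, i.e.\ $-u\geq\phi$; and the admissibility of $(\phi,\psi)$ gives $u\geq\psi$ everywhere. Hence replacing $(\phi,\psi)$ by $(-u,u)$ can only increase the dual objective, and the supremum over admissible pairs equals the supremum over $\Lip_1(\R^d)$.

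For the equivalence (i)$\Leftrightarrow$(ii), I would use the same chain of inequalities as a key identity. For every $u\in\Lip_1(\R^d)$ and every $\gamma\in\Pi(\mu,\nu)$,
$$ \int u\dx\nu - \int u \dx\mu = \iint (u(y)-u(x))\dx\gamma \leq \iint |y-x|\dx\gamma, $$
with pointwise integrand bounded by $|y-x|-u(y)+u(x)\geq 0$. If $u$ is a Kantorovich potential and $\gamma\in\O_1(\mu,\nu)$, both extremes equal $W_1(\mu,\nu)$ by \eqref{PbDual}, which forces equality in the middle; since the integrand $|y-x|-u(y)+u(x)$ is nonnegative, it must vanish $\gamma$-a.e., proving (ii). Conversely, if (ii) holds then $\iint|y-x|\dx\gamma = \int u\dx\nu-\int u\dx\mu$; the left-hand side is $\geq W_1(\mu,\nu)$ and the right-hand side is $\leq W_1(\mu,\nu)$, hence both equal $W_1(\mu,\nu)$, witnessing simultaneously the optimality of $\gamma$ and of $u$.

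The substantive content sits in the general Kantorovich duality theorem, which I would cite rather than reprove; the rest consists of the classical $c$-transform reduction specific to metric costs and an integrate-and-compare argument. The only technical care needed is to ensure finiteness of the transport cost, which in our setting follows from the assumptions on $\mu,\nu$ placed later in the paper (finite first moment), so the duality formula has both sides finite and the above manipulations are justified.
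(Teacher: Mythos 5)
The paper states Theorem~\ref{theodual} without proof, presenting it as a classical result (it is the Kantorovich--Rubinstein duality, covered in the monographs \cite{Vi2,SanOTAM} cited in the introduction), so there is no ``paper proof'' to compare against. Your argument is the standard one and is correct: the easy inequality by integrating the $1$-Lipschitz bound against $\gamma$; the hard inequality by invoking general Kantorovich duality and collapsing the pair $(\phi,\psi)$ to a single $1$-Lipschitz $c$-transform $u(y)=\inf_x\bigl(|y-x|-\phi(x)\bigr)$, which dominates the original pair and is itself admissible as $(-u,u)$; and the optimality characterization by the sandwich $\int u\dx\nu-\int u\dx\mu = \iint(u(y)-u(x))\dx\gamma \leq \iint|y-x|\dx\gamma$ with nonnegative integrand. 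Your closing remark about finiteness is the right caveat: for $\int u\dx\mu$ and $\int u\dx\nu$ to be well defined one needs finite first moments (automatic under the paper's standing compact-support assumption), and the equivalence (i)$\Leftrightarrow$(ii) degenerates if $W_1=+\infty$, but neither issue arises in the setting actually used.
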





The last theoretical notion we will need in this paper is the one of $\Gamma$-convergence.

\begin{definition} \label{defgammalim} Let $X$ be a complete metric space and $(F_n)_n, F$ be functionals $X\to \R\cup\{+\infty\}$. We say that $(F_n)_n$ $\Gamma$-converges to $F$ if, for any $x \in X$, the two following inequalities are satisfied:
\begin{itemize}
\item for any sequence $(x_n)_n$ of $X$ converging to $x$, $ \liminf\limits_n F_n(x_n) \geq F(x)$ {\em ($\Gamma$-liminf inequality)};
\item there exists a sequence $(x_n)_n$ of $X$ converging to $x$ such that $ \limsup\limits_n F_n(x_n) \leq F(x)$ {\em ($\Gamma$-limsup inequality)}.
\end{itemize} \end{definition}

The main interests of this notion are its implications in terms of convergence of minima and minimizers.

\begin{theorem} \label{gammamin} Let $(F_n)_n$ be a sequence of functions $X \to \R\cup\{\pm\infty\}$ and assume that $F_n \xrightarrow[n]{\Gamma} F$. Assume moreover that there exists a compact and nonempty subset $K$ of $X$ such that
$$ \forall n\in N, \; \inf_X F_n = \inf_K F_n $$
(we say that $(F_n)_n$ is equi-mildly coercive on $X$). Then $F$ admits a minimum on $X$ and the sequence $(\inf_X F_n)_n$ converges to $\min F$. Moreover, if $(x_n)_n$ is a sequence of $X$ such that
$$ \lim_n F_n(x_n) = \lim_n (\inf_X F_n)  $$
  and if $(x_{\phi(n)})_n$ is a subsequence of $(x_n)_n$ having a limit $x$, then $ F(x) = \inf_X F $. \end{theorem}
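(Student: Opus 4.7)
The plan is to apply the classical fundamental theorem of $\Gamma$-convergence, alternating use of the $\Gamma$-liminf and $\Gamma$-limsup inequalities, with the compact set $K$ serving to extract convergent subsequences.

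First I would build an almost-minimizing sequence inside $K$. By equi-mild coercivity, for each $n$ one can pick $x_n \in K$ with $F_n(x_n) \leq \inf_X F_n + 1/n$. Compactness of $K$ yields a subsequence $x_{\phi(n)} \to x \in K$, and the $\Gamma$-liminf inequality along this subsequence gives
$$ F(x) \leq \liminf_n F_{\phi(n)}(x_{\phi(n)}) \leq \liminf_n \inf_X F_n. $$

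Second, to bound $\limsup_n \inf_X F_n$ from above by $\inf_X F$, I would use the $\Gamma$-limsup inequality: for any $y \in X$ there exists a recovery sequence $y_n \to y$ with $\limsup_n F_n(y_n) \leq F(y)$, and since $\inf_X F_n \leq F_n(y_n)$ this gives $\limsup_n \inf_X F_n \leq F(y)$; taking the infimum over $y \in X$ yields $\limsup_n \inf_X F_n \leq \inf_X F$. Chaining the two bounds,
$$ \inf_X F \leq F(x) \leq \liminf_n \inf_X F_n \leq \limsup_n \inf_X F_n \leq \inf_X F, $$
so all four quantities coincide: the infimum of $F$ is attained at $x$, and the numerical sequence $\inf_X F_n$ converges to $\min F$.

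For the last assertion, given $(x_n)$ with $\lim_n F_n(x_n) = \lim_n \inf_X F_n = \min F$ and any convergent subsequence $x_{\phi(n)} \to x$, the $\Gamma$-liminf inequality applied along that subsequence gives $F(x) \leq \liminf_n F_{\phi(n)}(x_{\phi(n)}) = \min F$, hence $F(x) = \min F$. The only ingredient beyond the two halves of the definition of $\Gamma$-convergence is the compactness that produces the convergent subsequence in $K$, which is exactly what equi-mild coercivity supplies; there is no genuine obstacle in this abstract statement, which simply records the machinery to be invoked in the main $\Gamma$-convergence result of the paper.
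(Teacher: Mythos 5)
The paper states this theorem without proof, as it is the classical ``fundamental theorem of $\Gamma$-convergence,'' so there is no proof in the paper to compare against; your outline is the standard route. There is, however, a genuine if easily repaired gap in the first step. After choosing near-minimizers $x_n\in K$ with $F_n(x_n)\leq\inf_X F_n+1/n$ and extracting a convergent subsequence $x_{\phi(n)}\to x$, you assert
$$ \liminf_n F_{\phi(n)}(x_{\phi(n)})\ \leq\ \liminf_n\,\inf_X F_n. $$
What the near-minimizer property actually yields is only $\liminf_n F_{\phi(n)}(x_{\phi(n)})\leq\liminf_n\inf_X F_{\phi(n)}$, and a $\liminf$ taken along a subsequence is \emph{greater than or equal to}, not less than or equal to, the $\liminf$ of the full sequence; so the displayed inequality is unjustified and can fail. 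As a consequence, your chain of inequalities does establish that $\min F$ is attained at $x$ and that $\inf_X F_{\phi(n)}\to\min F$ along that particular subsequence, but it does not establish $\liminf_n\inf_X F_n\geq\inf_X F$, which is what is needed for the convergence of the full numerical sequence $(\inf_X F_n)_n$.

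The fix is to reverse the order of extraction: first pass to a subsequence $\psi$ along which $\inf_X F_{\psi(n)}\to\liminf_n\inf_X F_n$, then use compactness of $K$ to extract from $(x_{\psi(n)})_n$ a further convergent subsequence $x_{\psi(\chi(n))}\to y\in K$, and apply the $\Gamma$-$\liminf$ inequality along it to obtain $\inf_X F\leq F(y)\leq\liminf_n\inf_X F_n$. Combined with your (correct) $\Gamma$-$\limsup$ estimate $\limsup_n\inf_X F_n\leq\inf_X F$, this gives both attainment of the minimum and $\inf_X F_n\to\min F$. The remaining parts of your argument --- the $\Gamma$-$\limsup$ bound on the infima via recovery sequences, and the final assertion that any limit of a subsequence of a minimizing sequence is a minimizer --- are correct as written.
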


In our settings, all the functionals that we consider will be defined on the space $\Pi(\mu,\nu)$. Thanks to its compactness property, the equi-coercivity assumption of Theorem~\ref{gammamin} will always be satisfied in this paper: therefore, in order to conclude that the minimal values and the minimizers of a given family of functionals are converging to those of a given ``target'' functional, checking the upper and lower limit conditions of Definition~\ref{defgammalim} will be enough. \smallskip

We recall now the already known $\Gamma$-convergence results in entropic regularization (see~\cite[Theorem~2.7]{CarDuvPeySch} where the proof is given for $c(x,y)=|y-x|^2$ but can easily be adapted for a much larger class of cost functions).

\begin{theorem}[zeroth-order $\Gamma$-convergence] \label{ZeroGammaCV} Let $c:\R^d\to\R$ be continuous and $\mu,\nu$ be two probability measures on $\R^d$ having compact support and being absolutely continuous with respect to the Lebesgue measure. Assume that
$$ \int \log\frac{\text{\normalfont d}\mu}{\text{\normalfont d}\leb^d} \dx\mu,\,\int \log\frac{\text{\normalfont d}\nu}{\text{\normalfont d}\leb^d} \dx\nu < +\infty.  $$
Then the family of functionals
$$ J_\eps : \gamma \in\Pi(\mu,\nu) \mapsto \int c\dx\gamma + \eps \Ent(\gamma|\mu\otimes\nu) $$
is $\Gamma$-converging, as $\eps \to 0$, to $\gamma \mapsto \dint c\dx\gamma$. \end{theorem}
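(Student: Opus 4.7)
The plan is to prove separately the two $\Gamma$-convergence inequalities of Definition~\ref{defgammalim}. For the $\Gamma$-liminf inequality, pick any sequence $\gamma_\eps \to \gamma$ narrowly in $\Pi(\mu,\nu)$. Since $\mu$ and $\nu$ have compact supports, every $\gamma_\eps$ is supported on the fixed compact set $\supp(\mu)\times\supp(\nu)$, so the continuity of $c$ guarantees that $\int c\dx\gamma_\eps \to \int c\dx\gamma$. Moreover, $\mu\otimes\nu$ being a probability measure, Jensen's inequality applied to the convex function $t\mapsto t\log t$ yields $\Ent(\gamma_\eps|\mu\otimes\nu)\geq 0$, so that $\eps\,\Ent(\gamma_\eps|\mu\otimes\nu)\geq 0$. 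Combining these two observations immediately gives $\liminf_\eps J_\eps(\gamma_\eps)\geq \int c\dx\gamma$.

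For the $\Gamma$-limsup inequality, the only delicate case is when $\Ent(\gamma|\mu\otimes\nu) = +\infty$, since otherwise the constant sequence $\gamma_\eps=\gamma$ already satisfies $J_\eps(\gamma)\to \int c\dx\gamma$. The natural strategy is then to approximate any $\gamma\in\Pi(\mu,\nu)$ by transport plans of finite entropy through a \emph{block approximation}: given $\delta>0$, choose finite partitions $(Q_i^\delta)_i$ and $(R_j^\delta)_j$ of $\supp(\mu)$ and $\supp(\nu)$ into Borel sets of diameter at most $\delta$, and set
$$
\gamma_\delta := \sum_{\substack{i,j\\ (\mu\otimes\nu)(Q_i^\delta\times R_j^\delta)>0}} \frac{\gamma(Q_i^\delta\times R_j^\delta)}{(\mu\otimes\nu)(Q_i^\delta\times R_j^\delta)}\,(\mu\otimes\nu)\res(Q_i^\delta\times R_j^\delta).
$$
A direct check of marginals shows $\gamma_\delta \in \Pi(\mu,\nu)$; its density with respect to $\mu\otimes\nu$ is a bounded step function, so that $E(\delta) := \Ent(\gamma_\delta|\mu\otimes\nu)$ is finite; and $\gamma_\delta \to \gamma$ narrowly as $\delta \to 0$, for instance by testing against continuous functions and approximating them uniformly by step functions that are constant on each block.

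Once this approximation is in place, the recovery sequence is produced by a diagonal argument: for fixed $\delta$, $J_\eps(\gamma_\delta) = \int c\dx\gamma_\delta + \eps\,E(\delta) \to \int c\dx\gamma_\delta$ as $\eps\to 0$, while $\int c\dx\gamma_\delta \to \int c\dx\gamma$ as $\delta\to 0$ by uniform continuity of $c$ on the common compact support. Choosing $\delta = \delta(\eps)\to 0$ slowly enough that $\eps\,E(\delta(\eps))\to 0$ produces $\gamma_\eps := \gamma_{\delta(\eps)}$ converging narrowly to $\gamma$ with $\limsup_\eps J_\eps(\gamma_\eps)\leq \int c\dx\gamma$. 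The main obstacle is really the construction of a recovery sequence for a plan of infinite entropy; the block approximation combined with the diagonal choice is the standard tool used to overcome it, and the hypotheses of compact support and finite Lebesgue entropy of the marginals are precisely what is needed to ensure both the finiteness of $E(\delta)$ for fixed $\delta$ and the continuity properties used throughout.
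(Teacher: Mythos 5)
Your proof is correct and follows essentially the same route as the source the paper cites for this statement (Carlier--Duval--Peyr\'e--Schmitzer, Theorem~2.7): the $\Gamma$-liminf is a direct consequence of the nonnegativity of the relative entropy together with the narrow continuity of the linear transport cost on the fixed compact support, and the $\Gamma$-limsup is obtained via the block approximation (their Definition~2.9, which the paper itself reuses in the $\Gamma$-limsup part of the proof of Theorem~\ref{maintheo}) combined with a diagonal choice of block size $\delta(\eps)$. One minor remark on your closing comment: since each $\gamma_\delta$ has a bounded step density with respect to $\mu\otimes\nu$, the finiteness of $\Ent(\gamma_\delta|\mu\otimes\nu)$ holds automatically and does not actually invoke the finite-Lebesgue-entropy hypothesis on $\mu,\nu$; that hypothesis is carried over from the cited statement rather than being used in this particular argument.
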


We end this section by the ``first-order $\Gamma$-convergence'' result, which completes the previous one.

\begin{proposition}[first-order $\Gamma$-convergence] \label{FirstGammaCV} Under the same assumptions as Theorem~\ref{ZeroGammaCV}, denoting by $W_c(\mu,\nu)$ the minimal transport energy for cost $c$ and by $\mathcal{O}_c(\mu,\nu)$ the set of corresponding optimal plans, the family of functionals
$$ H_\eps : \gamma \in \Pi(\mu,\nu) \mapsto \frac{1}{\eps}\left(\int c\dx\gamma - W_c(\mu,\nu)\right) + \Ent(\gamma|\mu\otimes\nu) $$
is $\Gamma$-converging to
$$ H : \gamma  \in \Pi(\mu,\nu) \mapsto \left\{ \begin{array}{ll} \Ent(\gamma|\mu\otimes\nu) & \text{if } \gamma \in \mathcal{O}_c(\mu,\nu), \\ +\infty &\text{otherwise}. \end{array} \right. $$
\end{proposition}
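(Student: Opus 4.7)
The plan is to verify the two $\Gamma$-convergence conditions of Definition~\ref{defgammalim} separately; both naturally split according to whether the limit $\gamma$ is optimal for the cost $c$ or not, and the key feature that makes the argument clean is that $\int c\dx\gamma - W_c(\mu,\nu)$ vanishes identically on $\O_c(\mu,\nu)$.

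For the $\Gamma$-liminf inequality I would take an arbitrary sequence $(\gamma_\eps)_\eps$ in $\Pi(\mu,\nu)$ converging narrowly to some $\gamma$. A preliminary observation, used repeatedly, is that $\mu\otimes\nu$ is a probability measure, so Jensen's inequality applied to the convex function $t \mapsto t\log t$ yields $\Ent(\gamma_\eps|\mu\otimes\nu) \geq 0$. If $\gamma \notin \O_c(\mu,\nu)$, then continuity of $c$ together with the fact that all the plans $\gamma_\eps$ are supported in the fixed compact set $\supp\mu \times \supp\nu$ gives $\int c\dx\gamma_\eps \to \int c\dx\gamma > W_c(\mu,\nu)$, so the rescaled transport term alone tends to $+\infty$ and dominates the nonnegative entropy, yielding $\liminf_\eps H_\eps(\gamma_\eps) = +\infty = H(\gamma)$. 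If on the other hand $\gamma \in \O_c(\mu,\nu)$, then the rescaled cost term is nonnegative for every $\eps$ since $W_c(\mu,\nu)$ is the infimum of the transport energy, so $H_\eps(\gamma_\eps) \geq \Ent(\gamma_\eps|\mu\otimes\nu)$, and the lower semicontinuity of the entropy with respect to narrow convergence (recalled earlier in the section) gives $\liminf_\eps H_\eps(\gamma_\eps) \geq \Ent(\gamma|\mu\otimes\nu) = H(\gamma)$.

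For the $\Gamma$-limsup inequality the only nontrivial case is $\gamma \in \O_c(\mu,\nu)$ with $\Ent(\gamma|\mu\otimes\nu) < +\infty$; otherwise $H(\gamma) = +\infty$ and any sequence $\gamma_\eps \to \gamma$ trivially satisfies the estimate. When $\gamma \in \O_c(\mu,\nu)$, I would take the \emph{constant} recovery sequence $\gamma_\eps := \gamma$. Since $\gamma$ is optimal, $\int c\dx\gamma - W_c(\mu,\nu) = 0$, so the rescaled cost term vanishes identically in $\eps$ and one gets $H_\eps(\gamma_\eps) = \Ent(\gamma|\mu\otimes\nu) = H(\gamma)$ for every $\eps$, which certainly passes to the $\limsup$.

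There is no genuine obstacle here: the scaling $1/\eps$ is applied precisely to a quantity that is exactly zero on the target set, so no delicate construction of a recovery sequence is needed, and in the liminf the positivity of the entropy (coming from $\mu\otimes\nu$ being a probability measure) prevents any dangerous cancellation between the two terms of $H_\eps$. The result can thus be seen as a direct consequence of the zeroth-order $\Gamma$-convergence of Theorem~\ref{ZeroGammaCV} together with the lower semicontinuity and nonnegativity of $\Ent(\cdot|\mu\otimes\nu)$ on $\Pi(\mu,\nu)$.
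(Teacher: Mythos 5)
Your proof is correct and follows essentially the same route as the paper's: a constant recovery sequence for the $\Gamma$-limsup, and a case split on whether $\gamma\in\O_c(\mu,\nu)$ for the $\Gamma$-liminf, dropping the nonnegative rescaled transport term when $\gamma$ is optimal and invoking lower semicontinuity of $\Ent(\cdot|\mu\otimes\nu)$, and letting the transport term diverge when $\gamma$ is not optimal since the entropy is bounded below by zero. The only difference is that you spell out the nonnegativity of the entropy via Jensen and the convergence $\int c\dx\gamma_\eps\to\int c\dx\gamma$ via compactness of supports, both of which the paper takes for granted.
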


\begin{proof} The $\Gamma$-limsup inequality is trivial since, given $\gamma \in \Pi(\mu,\nu)$,
\begin{itemize}
\item if $H(\gamma) = +\infty$, there is nothing to prove;
\item if $H(\gamma) < +\infty$, this means that $E(\gamma|\mu\otimes\nu)$ is finite and $\int c\dx\gamma = W_c(\mu,\nu)$. Taking for $(\gamma_\eps)_\eps$ the constant family equal to $\gamma$, we have immediately $H_\eps(\gamma_\eps) = H(\gamma)$ for any $\eps$, such that the upper limit condition is satisfied.
\end{itemize}
As for the $\Gamma$-liminf, given $\gamma_\eps \to \gamma$ in $\Pi(\mu,\nu)$, then,
\begin{itemize}
\item if $\gamma \notin \mathcal{O}_c(\mu,\nu)$, since the entropy is positive on $\Pi(\mu,\nu)$, and $\int c \, d \gamma_{\eps} \to \int c \, d \gamma$, we have that for $\eps $ small enough
$$ H_\eps(\gamma_\eps) \geq \frac{1}{\eps} \left( \int c\dx\gamma_{\eps}-W_c(\mu,\nu)\right) \geq \frac{1}{2\eps} \left( \int c\dx\gamma-W_c(\mu,\nu)\right) \to +\infty; $$
\item otherwise, it is enough to observe, for any $\eps>0$, the inequality $H_\eps(\gamma_\eps) \geq \Ent(\gamma_\eps|\mu\otimes\nu)$ and to conclude by semicontinuity of the entropy. 
\end{itemize}
\end{proof}

The consequences of Theorem~\ref{ZeroGammaCV} and Proposition~\ref{ZeroGammaCV} are the following.
\begin{itemize}
\item The minimal value and the minimizers of $J_\eps$ are converging (up to subsequences) to the minimal transport energy and to an optimal transport plan for cost $c$, respectively. In particular, if there is only one optimal transport plan, then this plan is selected at the limit by the entropic regularization.
\item If the set $\mathcal{O}_c(\mu,\nu)$ contains at least one plan with finite entropy (which is equivalent to saying that $H$ is not identically $+\infty$), then the plan which is selected at the limit is the one having minimal entropy in the set $\mathcal{O}_c(\mu,\nu)$, and we have the asymptotic expansion
$$ \min J_\eps = W_c(\mu,\nu) + \eps \min H + \smallo(\eps) \qquad \text{as }\eps\to 0. $$
\item In the converse case, and if there is no uniqueness of the optimal plan for cost $c$, then the plan which is selected at the limit is unknown, and the only information on the behavior of the minimal value is $(\inf J_\eps - W_c(\mu,\nu))/\eps \to +\infty$.
\end{itemize}

As we will see later on in the paper, this last case may occur when $c(x,y) = |y-x|$, depending on the data $\mu$ and $\nu$; precisely, the next section gives a general description of the optimal plans for distance cost in dimension~one.

\section{Structure of one-dimensional optimal plans}

\subsection{Additional notation and the monotone transport map.} First of all, let us recall the precise definition of the monotone transport map that we will use, which is essentially taken from~\cite[Chapter~2]{SanOTAM}. Let $\mu,\nu$ be two probability measures on $\R$ compactly supported and with no atom, and let us denote by $F_\mu$, $F_\nu$ the cumulative distribution functions of $\mu,\nu$, respectively; we notice that since neither $\mu$ or $\nu$ has any atom, both $F_\mu$ and $F_\nu$ are continuous on $\R$. We define
$$ T(x) := \inf\Big\{ y\in\R \,:\, F_\nu(y) \geq F_\mu(x) \Big\}. $$
We can then prove the following properties, thanks to the boundedness of the supports of $\mu,\nu$ and to the continuity of $F_\mu,F_\nu$:
\begin{itemize}
\item the map $T$ is finitely valued on $(\inf(\supp\mu),+\infty)$;
\item for any $x$ with $\inf(\supp\mu) < x $, the infimum in the definition of $T(x)$ is attained; therefore,
\begin{equation}  \left\{ \begin{array}{l} F_\nu(T(x)) = F_\mu(x)
 \\  \text{and, for any } y<T(x),\; F_\nu(y) < F_\mu(x).
\end{array} \right.  \label{Tfmufnu} \end{equation}
\end{itemize}

The following theorem then holds; see~\cite[Theorems 2.5 and 2.9]{SanOTAM}.

\begin{theorem}The map $T$ is nondecreasing and satisfies $T_\#\mu=\nu$. Moreover, any other such map coincides with~$T$ except on a $\mu$-negligible set. Finally, for any cost function $c$ having form $c(x,y) = h(y-x)$ with $h$ convex, the map $T$ is optimal for the transport problem with cost $c$. \end{theorem}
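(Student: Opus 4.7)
The plan is to handle the three claims in order.

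\textbf{Monotonicity and pushforward.} Since $F_\mu$ is nondecreasing, the set $\{y : F_\nu(y) \geq F_\mu(x)\}$ shrinks as $x$ increases, so its infimum $T(x)$ is nondecreasing. For the pushforward identity, I combine the equality $F_\nu(T(x))=F_\mu(x)$ from~\eqref{Tfmufnu} with the minimality in the definition of $T$ to obtain $\{x : T(x) \leq y\}=\{x : F_\mu(x) \leq F_\nu(y)\}$; then continuity of $F_\mu$, granted by the no-atom assumption on $\mu$, yields $T_\#\mu((-\infty,y])=F_\nu(y)=\nu((-\infty,y])$ for every $y$, which determines $T_\#\mu=\nu$.

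\textbf{Uniqueness.} Let $S : \R \to \R$ be any other nondecreasing map satisfying $S_\#\mu=\nu$. Because $\nu$ has no atom, $S$ cannot possess any plateau of positive $\mu$-measure, since such a plateau at height $y_0$ would push mass into the singleton $\{y_0\}$ and create an atom for $\nu$. Hence for $\mu$-a.e.~$x$ the set $\{x' : S(x') \leq S(x)\}$ agrees with $(-\infty,x]$ up to a $\mu$-null set; its $\mu$-measure equals both $F_\mu(x)$ and $\nu((-\infty,S(x)])=F_\nu(S(x))$. This gives $F_\nu(S(x))=F_\mu(x)$ $\mu$-a.e., and comparing with the infimum characterization of $T$ (together with the same identity for $T$) yields $S(x)=T(x)$ $\mu$-a.e.

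\textbf{Optimality for convex costs.} The key ingredient is the submodularity inequality
\[
h(y_1-x_1)+h(y_2-x_2) \leq h(y_2-x_1)+h(y_1-x_2) \qquad (x_1 \leq x_2,\; y_1 \leq y_2),
\]
which holds because the pair $(y_1-x_1,\, y_2-x_2)$ has the same sum as $(y_2-x_1,\, y_1-x_2)$ but is less spread out, so convexity of $h$ yields the inequality. I then conclude by a discretization argument: approximate $\mu$ and $\nu$ by the $n$-quantile empirical measures $\mu_n=\frac{1}{n}\sum_{k=1}^n \delta_{x_k^n}$ and $\nu_n=\frac{1}{n}\sum_{k=1}^n \delta_{y_k^n}$, with $(x_k^n)_k$ and $(y_k^n)_k$ arranged in increasing order. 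For the finite assignment problem on $\Pi(\mu_n,\nu_n)$, iterated application of the submodularity (removing inversions one by one) shows that the sorted coupling $\frac{1}{n}\sum_k \delta_{(x_k^n, y_k^n)}$ is the optimizer. Passing to the limit is the main obstacle: one must verify both that these discrete optima converge in cost to $\int c \dx \gamma_T$ (which uses $\mu_n \rightharpoonup \mu$, $\nu_n \rightharpoonup \nu$, compactness of the supports and continuity of $h$) and that any competitor $\gamma \in \Pi(\mu,\nu)$ can be approximated by admissible discrete plans without increase in cost, e.g.~by composing $\gamma$ with the quantile maps that transport $\mu$ onto $\mu_n$ and $\nu$ onto $\nu_n$.
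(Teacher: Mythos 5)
The paper itself does not prove this theorem; it simply cites \cite[Theorems 2.5 and 2.9]{SanOTAM}, so there is no internal argument to compare against. Your route for monotonicity and pushforward is the standard one. For optimality you use the quadrangle inequality together with a discretization-and-limit argument, which is a legitimate and more elementary alternative to the usual textbook proof via $c$-cyclical monotonicity (showing that the graph of a nondecreasing map is $c$-cyclically monotone for costs $h(y-x)$ with $h$ convex, then invoking the general optimality criterion). The trade-off is exactly what you identify: the discretization avoids the cyclical-monotonicity machinery but shifts the weight of the proof to the limit passage (convergence of the discrete optimal costs, and approximation of an arbitrary competitor), which requires some care with the quantile projections and bounded convergence, though it does go through on compact supports.

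There is, however, a concrete gap in your uniqueness step. From $F_\nu(S(x))=F_\mu(x)=F_\nu(T(x))$ $\mu$-a.e.\ and the infimum characterization of $T$, you may conclude only that $S(x)\geq T(x)$ $\mu$-a.e.; you cannot ``compare'' your way to $S=T$, because $F_\nu$ fails to be injective wherever $\supp\nu$ has gaps, so $F_\nu(S(x))=F_\nu(T(x))$ does not force $S(x)=T(x)$. To close this you need an extra argument: for instance, integrate a bounded strictly increasing $\phi$ against $\mu$ --- since $S_\#\mu=T_\#\mu=\nu$, one gets $\int\phi\circ S\dx\mu=\int\phi\circ T\dx\mu$, while $\phi\circ S\geq\phi\circ T$ with strict inequality on $\{S>T\}$, so $\mu(\{S>T\})=0$. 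Alternatively, note that on $\{S>T\}$ the interval $(T(x),S(x))$ is $\nu$-null, hence (for $\mu$-a.e.\ such $x$, using $T(x)\in\supp\nu$) $T(x)$ is an endpoint of one of the at most countably many gaps of $\supp\nu$, a $\nu$-null set whose $T$-preimage is $\mu$-null.
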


The following lemma is useful in order to make precise the links between $F_\mu,F_\nu$ and the sign of $T-\id$.

\begin{lemma} \label{LemTFmunu} For $\mu$-a.e.~every point of $\R$, the following equivalences are true:
\begin{enumerate}[label={\normalfont (\alph*)}]
\item $T(x)=x$ if and only if $F_\mu(x) = F_\nu(x)$;
\item $T(x)<x$ if and only if $F_\mu(x) < F_\nu(x)$;
\item $T(x) > x$ if and only if $F_\mu(x)> F_\nu(x)$.
\end{enumerate}
\end{lemma}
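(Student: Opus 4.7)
My plan is to derive the three ``only if'' directions of (a), (b), (c) directly from \eqref{Tfmufnu}, to obtain the ``if'' directions of (b) and (c) by a partition argument, and to treat the ``if'' direction of (a) by a separate $\mu$-negligibility argument.

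For the direct implications: if $T(x)=x$, then $F_\nu(x)=F_\nu(T(x))=F_\mu(x)$ by \eqref{Tfmufnu}; if $T(x)>x$, applying the second line of \eqref{Tfmufnu} with $y=x$ yields $F_\nu(x)<F_\mu(x)$; and if $T(x)<x$, monotonicity of $F_\nu$ gives $F_\mu(x)=F_\nu(T(x))\leq F_\nu(x)$. Since $T$ is defined at $\mu$-a.e.\ point and the three cases $T(x)<x$, $T(x)=x$, $T(x)>x$ partition this full-$\mu$-measure set, the converses in (b) and (c) follow by elimination: for instance, $F_\mu(x)<F_\nu(x)$ excludes both $T(x)=x$ (which would give equality) and $T(x)>x$ (which gives the reverse strict inequality), leaving only $T(x)<x$; likewise for (c). The same reasoning shows $F_\mu(x)=F_\nu(x)\Rightarrow T(x)\leq x$, and only the exclusion of $T(x)<x$ in case~(a) remains.

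The main obstacle is precisely this last step. Set $C:=\{x:F_\mu(x)=F_\nu(x)\text{ and }T(x)<x\}$; I want $\mu(C)=0$. For $x\in C$, the identity $F_\nu(T(x))=F_\mu(x)=F_\nu(x)$ together with $T(x)<x$ forces $F_\nu$ to be constant on the nondegenerate interval $[T(x),x]$. Thus $x$ lies in a nontrivial maximal closed interval on which $F_\nu$ is constant; there are at most countably many such intervals, say $(J_k)_{k\geq 1}$ with respective values $\ell_k$. For $x\in C\cap J_k$ one has $F_\mu(x)=F_\nu(x)=\ell_k$, so $C\cap J_k$ is contained in the closed interval $\{F_\mu=\ell_k\}$, on which $F_\mu$ is constant. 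This interval has zero $\mu$-mass (its interior by the constancy of $F_\mu$, its endpoints by atomlessness of $\mu$), and summing over $k$ yields $\mu(C)=0$. The crucial ingredients are the countability of the nontrivial constancy intervals of $F_\nu$ and the atomlessness of $\mu$; everything else is a straightforward unpacking of the definition~\eqref{Tfmufnu}.
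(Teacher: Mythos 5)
Your proof is correct. The core observation is shared with the paper's proof: if $T(x)<x$ and $F_\mu(x)=F_\nu(x)$, then $F_\nu(T(x))=F_\mu(x)=F_\nu(x)$ forces $F_\nu$ to be constant on $[T(x),x]$, placing $x$ (and $T(x)$) in one of the countably many non-degenerate constancy intervals of $F_\nu$ --- equivalently, closures of connected components of $\R\setminus\supp\nu$. Where the two arguments diverge is in how they convert this countability into $\mu$-negligibility. The paper pushes forward: it notes that $T(x)$ must then lie in the (countable, hence $\nu$-null because $\nu$ is atomless) set of boundary points of $\R\setminus\supp\nu$, so the problematic set is $T^{-1}$ of a $\nu$-null set and hence $\mu$-null because $T_\#\mu=\nu$. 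You instead stay on the $\mu$ side: on $C\cap J_k$ the function $F_\mu$ itself is frozen at the value $\ell_k$, so $C\cap J_k$ sits inside a level set of $F_\mu$, which is a closed interval of zero $\mu$-mass by constancy on its interior and atomlessness of $\mu$ at its endpoints. Your route avoids both the push-forward step and the remark that $T(x)\in\supp\nu$, at the modest cost of the extra observation about $F_\mu$. The organization is also a bit different --- the paper proves the two directions of (b), says (c) is analogous, and deduces (a); you extract all three ``only if'' implications from \eqref{Tfmufnu} (with the weak $\leq$ version for (b)), get the converses of (b), (c) by partition and elimination, and isolate the remaining content in the single claim $\mu(C)=0$, which at once upgrades your (b) direct implication from $\leq$ to $<$ and closes the ``if'' direction of (a). Both proofs are essentially comparable in length and difficulty.
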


\begin{proof} Let us start by the equivalence (b). First of all, let $x$ be a point of the support of $\mu$, different of its bounds, and such that $T(x)<x$; in particular, the monotonicity of $F_\nu$ implies that $F_\nu(T(x)) \leq F_\nu(x)$. Assume moreover that $F_\mu(x) \geq F_\nu(x)$: keeping in mind that $F_\nu\circ T$ and $F_\mu$ coincide, this enforces $F_\nu(x) = F_\nu(T(x))$. We deduce that the interval $(T(x),x)$ does not meet the support of $\nu$: therefore, $T(x)$, which belongs to $\supp\nu$, is actually a boundary point of one of the connected components of $\R\setminus\supp\nu$. These boundary points being countably many, they form a $\nu$-negligible set, and the preimage of this set by $T$ is therefore $\mu$-negligible, proving that the direct implication in~(b) is true for $\mu$-a.e.~$x$.

Conversely, let $x$ be such that $F_\mu(x) < F_\nu(x)$ and assume that $T(x)\geq x$: therefore, since $F_\nu$ is nondecreasing,
$$ F_\nu(T(x)) \geq F_\nu(x) > F_\mu(x), $$
which is impossible since $F_\nu(T(x)) = F_\mu(x)$.

The equivalence (c) can be proved by pretty similar arguments, and (a) is an obvious consequence of (b) and (c).
\end{proof}

In what follows in the paper, we will often use the following sets:
$$ A := \Big\{ x \in \R\,:\, F_\mu(x) = F_\nu(x) \Big\} $$
$$ \text{and} \qquad A^+ := \Big\{ x\in\R \,:\, F_\mu(x) > F_\nu(x) \Big\},\; \qquad A^- := \Big\{ x\in\R \,:\, F_\mu(x) < F_\nu(x) \Big\}. $$
We notice that $A^+$ and $A^-$ are both open and that $A$, which is the complementary set of their union, is closed. Moreover,  thanks to the Lemma~\ref{LemTFmunu}, these three sets coincide, up to $\mu$-negligible sets, with the sets where $T-\id$ is respectively zero, positive and negative. The following remark, though obvious, will also sometimes be useful.

\begin{lemma} \label{lemtrivial} Let $I$ be a maximal interval fully included in $A^-$ or in $A^+$. Then $\mu(I)=\nu(I)$. \end{lemma}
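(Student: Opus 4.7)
The proof is essentially a direct consequence of the continuity of the cumulative distribution functions together with the characterization of $A^+$ and $A^-$ as open sets. I sketch the argument for the case $I \subset A^+$ (the case $A^-$ being entirely analogous).

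First, I observe that since $F_\mu$ and $F_\nu$ are continuous on $\R$ (because neither $\mu$ nor $\nu$ has atoms), the function $F_\mu - F_\nu$ is continuous, and therefore $A^+ = (F_\mu - F_\nu)^{-1}((0,+\infty))$ is open. The maximal connected components of an open subset of $\R$ are open intervals; write $I = (a,b)$. Moreover, as $\mu$ and $\nu$ are compactly supported, there exists $R>0$ such that $F_\mu \equiv F_\nu \equiv 0$ on $(-\infty,-R)$ and $F_\mu \equiv F_\nu \equiv 1$ on $(R,+\infty)$; in particular $A^+ \subset [-R,R]$, so $a$ and $b$ are finite.

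Next, I identify the values of $F_\mu$ and $F_\nu$ at the endpoints. By maximality of $I$, the points $a$ and $b$ do not belong to $A^+$, so $F_\mu(a) \leq F_\nu(a)$ and $F_\mu(b) \leq F_\nu(b)$. On the other hand, for every $x \in (a,b)$ we have $F_\mu(x) > F_\nu(x)$, and letting $x \to a^+$ and $x \to b^-$ in this inequality, the continuity of $F_\mu - F_\nu$ yields $F_\mu(a) \geq F_\nu(a)$ and $F_\mu(b) \geq F_\nu(b)$. Combining the two directions gives the equalities $F_\mu(a) = F_\nu(a)$ and $F_\mu(b) = F_\nu(b)$.

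Finally, I compute the two masses. Since $\mu$ and $\nu$ have no atoms, $\mu(\{a\}) = \mu(\{b\}) = 0$ and similarly for $\nu$, so
$$ \mu(I) = \mu([a,b]) = F_\mu(b) - F_\mu(a) = F_\nu(b) - F_\nu(a) = \nu([a,b]) = \nu(I), $$
which is the claim. There is no real obstacle in this proof; the only point requiring a little care is verifying that the boundary values of $F_\mu$ and $F_\nu$ agree, which relies crucially on the continuity of these cumulative distribution functions (i.e., on the absence of atoms of $\mu$ and $\nu$, an assumption that has been standing throughout this subsection).
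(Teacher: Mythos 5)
Your proof is correct and follows the same route as the paper's: identify $I=(a,b)$ as an open bounded interval, use maximality and continuity of $F_\mu-F_\nu$ to get $F_\mu=F_\nu$ at both endpoints, then conclude via the cumulative distribution functions. You have simply written out in full the endpoint argument that the paper compresses into ``we can easily see.''
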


\begin{proof} Such an $I$ is necessarily an open and bounded interval of $\R$; calling it $(a,b)$ and using its maximality and the continuity of $F_\mu$, $F_\nu$, we can easily see that $F_\mu(a)=F_\nu(a)$ and $F_\mu(b) = F_\nu(b)$, so that $\mu(I)=\nu(I)$. \end{proof}

We can now state the precise result on the structure of one-dimensional optimal plans.

\begin{proposition} \label{1DOptPlans} Let $\mu,\nu \in \mathcal{P}(\R)$ be atomless and compactly supported. Then the optimal plans for the Monge problem are exactly the transport plans from $\mu$ to $\nu$ such that, for $\gamma$-a.e.~$(x,y)\in\R^2$,
\begin{enumerate}[label={\normalfont (\alph*)}]
\item if $x \in A$, then $y=x$; \label{impl1}
\item if $x \in A^+$, then $y \geq x$, and $y$ belongs to the same connected component of $A^+$ as $x$;\label{impl2}
\item similarly, if $x \in A^-$, then $y \leq x$, and $y$ belongs to the same connected component of $A^-$ as $x$. \label{impl3}
\end{enumerate} \end{proposition}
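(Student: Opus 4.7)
My plan is to base both implications of Proposition~\ref{1DOptPlans} on the duality of Theorem~\ref{theodual}, via the explicit Kantorovich potential
$$ u(x) := \int_0^x \sgn(F_\mu(t)-F_\nu(t)) \dx t, $$
which is $1$-Lipschitz and satisfies $u'=+1$ a.e.\ on $A^+$, $u'=-1$ a.e.\ on $A^-$, $u'=0$ a.e.\ on $A$. That $u$ attains the supremum in \eqref{PbDual} will follow from the classical one-dimensional formula $W_1(\mu,\nu)=\int_{\R}|F_\mu-F_\nu|\dx t$, which matches $\int u\dx\nu-\int u\dx\mu$ after one integration by parts using the compactness of the supports.

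The sufficient direction is then immediate: for any $\gamma \in \Pi(\mu,\nu)$ satisfying (a), (b), (c), I would check $u(y)-u(x)=|y-x|$ on the three allowed configurations (a trivial pair in $A$; a pair in the same component of $A^+$ with $y\geq x$; or the symmetric situation in $A^-$) by direct computation of $\int_x^y u'\dx t$, and conclude via Theorem~\ref{theodual}.

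For the necessary direction, an optimal $\gamma$ satisfies $|y-x|=u(y)-u(x)$ for $\gamma$-a.e.\ $(x,y)$; when $y>x$, rewriting as $\int_x^y (1-u')\dx t=0$ with $u'\leq 1$ forces $u'=+1$ a.e.\ on $[x,y]$, and symmetrically for $y<x$. This already yields the directional parts of (b) and (c) (e.g.\ $x\in A^+$ with $y<x$ would force $u'=-1$ a.e.\ near $x$, impossible). To obtain (a) and the ``same connected component'' parts of (b), (c), I would combine the slope analysis with a flow argument at points of $A$: the marginal constraints give
$$ \gamma(\{x<t<y\}) - \gamma(\{y<t<x\}) = F_\mu(t) - F_\nu(t) = 0 \quad \text{for every } t\in A, $$
while a positive $\gamma$-mass of rightward crossings through $t$ would, by the slope analysis, force $u'=+1$ a.e.\ on an open interval containing $t$, and a positive mass of leftward crossings would force $u'=-1$ there; these being incompatible, both quantities must vanish at every $t\in A$. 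Integrating this vanishing against Lebesgue measure on the interior of $A$ (open, hence meeting any open interval either trivially or in positive measure) and taking a countable union over $\partial A$ (the at most countable set of endpoints of components of $A^\pm$) should upgrade this to: for $\gamma$-a.e.\ $(x,y)$, the open interval $(\min(x,y),\max(x,y))$ is disjoint from $A$. From here, (a) would follow because $\mu$-a.e.\ $x\in A$ is actually an interior point of $A$, so any $y\neq x$ would push a neighborhood of $x$ into $A\cap(\min(x,y),\max(x,y))$; (b) follows because if $(a,b)$ is the component of $A^+$ containing $x$, the absence of $A$ in the open interval forces $y\leq b$, and $\nu$-atomlessness excludes $y=b$ for $\gamma$-a.e.\ $(x,y)$; case (c) is symmetric.

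I expect the main obstacle to be this flow argument at points of $A$. Interior points are cleanly dispatched by Fubini and openness, but the countably many boundary points of $A$ need to be treated separately, via the mutual incompatibility of the two crossing directions together with the equality of their masses coming from the marginal constraints. The atomlessness of $\mu$ and $\nu$ will be used essentially at the end to discard these countable exceptional sets from the $\gamma$-a.e.\ conclusion.
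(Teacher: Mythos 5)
Your overall strategy (dualizing against the explicit Kantorovich potential $u(x)=\int_0^x(\ind_{A^+}-\ind_{A^-})\dx t$, deriving the directional constraints from the slope equality $u(y)-u(x)=|y-x|$, and using the crossing-mass identity at points of $A$) is the same as the paper's; your sufficient direction, which verifies the equality $u(y)-u(x)=|y-x|$ directly on the three configurations and invokes Theorem~\ref{theodual}(ii)$\Rightarrow$(i), is correct and in fact a little cleaner than the paper's direct computation of $\int|y-x|\dx\gamma$. The flow argument you describe at a fixed $t\in A$ (equality $R(t)=L(t)$ of the two crossing masses from the marginal constraint, plus incompatibility of the two slopes if both were positive) is exactly the paper's Step~I.

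The gap is in how you pass from ``$R(t)=L(t)=0$ for every $t\in A$'' to the $\gamma$-a.e.\ statement. You propose Fubini against Lebesgue on $\operatorname{int}(A)$ plus a countable union over $\partial A$, ``the at most countable set of endpoints of components of $A^\pm$''; but $\partial A$ is \emph{not} that set, and it need not be countable. If $A$ contains a fat Cantor set $C$ (which is easy to arrange: take $\nu$ uniform on $[0,1]$ and $F_\mu=F_\nu+h$ with $h$ smooth, vanishing exactly on $C\cup([0,1])^c$ and with $h'>-1$), then $\operatorname{int}(A)\cap(0,1)=\emptyset$ while $\partial A\supset C$ is uncountable and $\mu(A)>0$; the Lebesgue--Fubini step gives no information there, and the countable union fails. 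The same example kills the later claim that ``$\mu$-a.e.\ $x\in A$ is an interior point of $A$''. What actually makes the argument go through is that the $\gamma$-null sets $(-\infty,t]\times[t,+\infty)$ and $[t,+\infty)\times(-\infty,t]$ are closed with nonempty interior, so they miss $\operatorname{supp}\gamma$: for $(x,y)\in\operatorname{supp}\gamma$ with $x<y$ and $(x,y)\cap A\neq\emptyset$, picking any $t\in(x,y)\cap A$ places $(x,y)$ in an open $\gamma$-null rectangle, a contradiction. This needs no countability of $A$ and no Fubini. The only countable exceptional set the paper has to discard is the set of endpoints of the components of $A^+\cup A^-$ (arising when $(x,y)\cap A=\emptyset$ but $x\in A$, which forces $x$ to be such an endpoint), and this \emph{is} countable because $A^+\cup A^-$ is open with countably many components. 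If you really want a Fubini argument, you can salvage it by integrating $R(\cdot)$ against a finite measure $\sigma$ with $\operatorname{supp}\sigma=A$ (e.g.\ weighted Diracs on a countable dense subset of $A$) instead of Lebesgue: then $\sigma((x,y)\cap A)=0$ for $\gamma$-a.e.\ $(x,y)$ forces $(x,y)\cap A=\emptyset$; but the support argument is simpler.
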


The key point of the proof of Proposition~\ref{1DOptPlans} consists in building a suitable Kantorovich potential, which is the topic of the next subsection.

\subsection{Construction of a {K}antorovich potential.}

\begin{lemma} \label{potential} For $x \in \R$, define
$$ u(x) = \int_0^x \big(\ind_{A^+}-\ind_{A^-}\big)(t) \dx t. $$
Then $u$ is a Kantorovich potential from $\mu$ to $\nu$, that is, a maximizer of the problem~\eqref{PbDual}.
\end{lemma}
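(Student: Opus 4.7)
To prove that $u$ is a Kantorovich potential, two things must be verified: that $u \in \Lip_1(\R)$, and that $u$ attains the supremum in the dual problem~\eqref{PbDual}. The first is immediate since $u' = \ind_{A^+} - \ind_{A^-}$ is bounded by $1$ in absolute value almost everywhere, hence $|u(y)-u(x)| \leq |y-x|$ for all $x,y\in\R$. For the second, rather than seeking an optimal plan paired with $u$ via the complementary-slackness condition (ii) of Theorem~\ref{theodual}, I would work directly on the dual side and simply check that $\int u\dx\nu - \int u\dx\mu$ equals the value of the $\sup$ in \eqref{PbDual}.

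The key computation is the identity, valid for every $v \in \Lip_1(\R)$,
\begin{equation*}
\int v\dx\nu - \int v\dx\mu \;=\; \int_\R v'(t)\bigl(F_\mu(t) - F_\nu(t)\bigr)\dx t.
\end{equation*}
I would obtain this by writing $v(x) = v(0) + \int_0^x v'(s)\dx s$, substituting into $\int v\dx\mu$ and $\int v\dx\nu$, and applying Fubini separately on the positive and negative half-lines (noting that $\mu((-\infty,t)) = F_\mu(t)$ by atomlessness). The compact support of $\mu,\nu$ and the bound $|v'|\leq 1$ make all integrands integrable and legitimize the swap of integrals.

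Applied to our specific $u$, one has by the very definitions of $A^+$ and $A^-$ that $u'(t) = \sgn\bigl(F_\mu(t) - F_\nu(t)\bigr)$ for every $t \in \R$, so the identity above yields $\int u\dx\nu - \int u\dx\mu = \int_\R |F_\mu - F_\nu|\dx t$. For an arbitrary $v \in \Lip_1(\R)$, bounding $|v'|\leq 1$ pointwise under the integral gives $\int v\dx\nu - \int v\dx\mu \leq \int_\R |F_\mu - F_\nu|\dx t$. Thus $u$ realizes the supremum of the dual functional, which is exactly what is required. The only delicate point is the Fubini step, where one must keep track of the splitting at $0$ and of the sign of $v'$ on each half-line; this is entirely routine and causes no genuine difficulty thanks to the boundedness of the supports.
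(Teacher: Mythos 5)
Your proof is correct, and it takes a genuinely different route from the paper's. The paper proves the lemma by exhibiting an optimal \emph{plan} to pair with $u$: it uses the monotone map $T$ and checks the complementary-slackness condition (ii) of Theorem~\ref{theodual}, namely $u(T(x))-u(x)=|T(x)-x|$ for $\mu$-a.e.~$x$, via a short contradiction argument built on the defining property~\eqref{Tfmufnu} of $T$. You instead stay entirely on the dual side: you establish the integration-by-parts identity $\int v\dx\nu-\int v\dx\mu=\int_\R v'(t)\big(F_\mu(t)-F_\nu(t)\big)\dx t$ for all $v\in\Lip_1(\R)$ via Fubini, then observe that $u'=\sgn(F_\mu-F_\nu)$ saturates the pointwise bound $|v'|\leq 1$ in that integral, so $u$ achieves the supremum $\int_\R|F_\mu-F_\nu|$. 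Your approach has the advantage of being self-contained --- it needs neither the monotone map $T$, nor Lemma~\ref{LemTFmunu}, nor the primal side of the duality theorem --- and it re-derives the classical one-dimensional formula $W_1(\mu,\nu)=\int|F_\mu-F_\nu|$ along the way. The paper's approach is shorter once the machinery around $T$ is already in place (as it is, since it is needed anyway for Proposition~\ref{1DOptPlans}), and it directly produces the plan--potential pair used in the rest of the argument. Both are sound; yours is arguably the more elementary and transparent of the two.
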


\begin{proof}[Proof of Lemma~\ref{potential}.] The fact that $u$ is $1$-Lipschitz being obvious, the only non-trivial point of the proof is checking that
\begin{equation} u(T(x))-u(x) = |T(x)-x| \quad \text{for }\mu\text{-a.e. }x\, , \label{egpotentiel} \end{equation}
which will ensure that $u$ is optimal for the dual problem. Let then $x$ be a point of the support of $\mu$ such that $T(x) \neq x$ (otherwise the equality~\eqref{egpotentiel} is obviously satisfied); assume without loss of generality that $T(x) > x$. By contradiction, suppose that $u(T(x))-u(x) < |T(x)-x|$ (the converse inequality cannot hold since $u$ is a $1$-Lipschitz function): from the definition of $u$, this implies that $\mathcal{L}([x,T(x)] \setminus A^+) \neq 0$ and in particular there exists a point $y$ such that $x<y<T(x)$ and $y \notin A^+$, that is, $F_\mu(y) \leq F_\nu(y)$. By the property~\eqref{Tfmufnu}, we deduce~$y\geq T(y)$: summarizing,\
$$ \left\{ \begin{array}{l} x < y ; \\ T(x) > y \geq T(y). \end{array}\right. $$
Since $T$ is a nondecreasing function, this is impossible. The arguments are identical in the case $T(x)<x$.
\end{proof}

The two following paragraphs are then devoted to the rigorous proof of Proposition~\ref{1DOptPlans}.

\subsection{Necessary condition to be optimal.} In this paragraph, we prove that all the optimal plans for the Monge problem satisfy properties~(a)-(c) of Proposition~\ref{1DOptPlans}. We then fix $\gamma \in \O_1(\mu,\nu)$.

{\it \underline{Step I:} for any $\bar x \in A$, we have $\gamma((-\infty,\bar x]\times [\bar x,+\infty)) = \gamma([\bar x,+\infty) \times (-\infty,\bar x]) = 0$.} Let $\bar x \in A$, and assume by contradiction that
\begin{equation} \gamma((-\infty,\bar x]\times[\bar x,+\infty))>0. \label{HalfSpacePositive1} \end{equation}
Notice that \eqref{HalfSpacePositive1} implies that $0 < F_\mu(\bar x) = F_\nu(\bar x) <1$. We first show that
\begin{equation} \gamma([\bar x,+\infty)\times (-\infty,\bar x]) >0. \label{HalfSpacePositive} \end{equation}
Indeed, if \eqref{HalfSpacePositive} was false, then we would have
$$\nu([\bar x,+\infty)) = \mu([x,+\infty))  = \gamma([\bar x,+\infty)\times \R)= \gamma([\bar x,+\infty)^2)   $$
and consequently
$$ \gamma((-\infty,\bar x]\times[\bar x,+\infty)) = \nu([\bar x,+\infty))-\gamma([\bar x,+\infty)\times [\bar x,+\infty)) = 0, $$    
which contradicts \eqref{HalfSpacePositive1}

From \eqref{HalfSpacePositive1} and \eqref{HalfSpacePositive}, we deduce that there exists $(x_1,y_1)$ and $(x_2,y_2)$ in the support of $\gamma$ with
$$ x_1 < \bar x < y_1 \quad\text{and}\quad y_2 < \bar x < x_2. $$
Theorem~\ref{theodual} and the continuity of~$u$ imply that $u(y_1)-u(x_1) = y_1-x_1$ and $u(y_2)-u(x_2) = -(y_2-x_2)$: since $u$ is 1-Lipschitz, this enforces~$u$ to be an affine function with slope~1 and~$-1$ on the whole intervals $[x_1,y_1]$ and $[y_2,x_2]$, respectively. But these intervals have nontrivial intersection, which leads to a contradiction. \smallskip

{\it \underline{Step II:} $\gamma$ satisfies property {\em (a)} of Proposition~\ref{1DOptPlans}.} Let $x \in A$, let $y$ such that $(x,y)$ belongs to the support of $\gamma$, and assume by contradiction that $y>x$. Without loss of generality, we may assume         
$$ (x,y)\cap A \neq \emptyset.$$
Indeed, if this was not the case, $x$ would be a boundary point of one of the connected components of $\R\setminus A$: since this set is open, such points are countably many, forming then a $\mu$-negligible set. Let then $x' \in A$ such that $x < x' < y$. From the result of Step~I, it holds that
\begin{equation} \gamma((-\infty,x']\times[x',+\infty)) = 0. \label{HalfSpaceZero} \end{equation}
In particular, selecting a positive $\eps < \text{min}(x'-x,y-x')$, we deduce from~\eqref{HalfSpaceZero} that $\gamma([x\pm\eps]\times[y\pm\eps])=0$ so that $(x,y)$ does not belong to the support of $\gamma$, a contradiction. Analogously, we prove that the inequality $y<x$ cannot hold except for at most countably points~$x$. \smallskip

{\it \underline{Step III:} $\gamma$ satisfies the  properties {\em (b)} and {\em (c)} of Proposition~\ref{1DOptPlans}.} Let $x \in A^+$ and $y \in \R$ be such that $(x,y)$ belongs to the support of $\gamma$ and $u(y)-u(x) = |y-x|$. First, assume that $y<x$, which implies
$$  -\int_y^x (\ind_{A^+}-\ind_{A^-}) = x-y;$$
in particular, since $A^+$ is an open set and does not meet $A^-$, we must have $A^+ \cap (y,x) = \emptyset$, but since $x$ belongs to $A^+$ which is open, this is impossible.

We then have $y\geq x$; let us now prove that $y$ belongs to the same connected component of $A^+$ as $x$. Since $A^+$ is an open set it suffices to prove that $(x,y) \subset A^+$. As above, we deduce from the equality $u(y)-u(x) = |y-x|$ that $A^-\cap (x,y) = \emptyset$; therefore it suffices to prove that $(x,y)$ does not meet $A$. Assume then that there exists $z\in A$ with $x<z<y$; the result of Step~I gives then
$$ \gamma((-\infty,z]\times[z,+\infty)) = 0.$$
By selecting a positive $\eps< \min(z-x,y-z)$ we obtain again $\gamma([x-\eps,x+\eps]\times[y-\eps, y+\eps]) = 0$, a contradiction since $(x,y)$ belongs to the support of $\gamma$. As before, the arguments in the case $x\in A^-$ are identical.

\subsection{Sufficient condition to be optimal.} \label{suffcond} We now focus on the converse implication of Proposition~\ref{1DOptPlans} and denote by $(I_j^-)_j$, $(I_k^+)_k$ the connected components of $A^-$, $A^+$, respectively (which are at most countably many). We also set for each $j,k$,
$$ \mu_j^- = \mu \res {I_j^-}, \; \mu_k^+ = \mu \res {I_k^+}, $$
$$ \text{and} \quad \nu_j^- = \nu \res {I_j^-}, \; \nu_k^+ = \nu \res{I_k^+}. $$
Lemma~\ref{lemtrivial} implies that $\mu_j^-,\nu_j^-$ (respectively,~$\mu_k^+,\nu_k^+$) have the same mass. Moreover, thanks to the previous subsection, the points (a)-(c) of Proposition~\ref{1DOptPlans} apply for the plan which is induced by the map~$T$, which enforces, for any $j,k$,
$$ \nu_j^- = T_\#\mu_j^- \qquad\text{and}\qquad \nu_k^+ = T_\#\mu_k^+.  $$
Using also Lemma~\ref{LemTFmunu}, we deduce
\begin{align} W_1(\mu,\nu) &= \int |T(x)-x| \dx\mu(x) \nonumber \\
  & = \dsum\limits_j \dint_{I_j^-} (T(x)-x) \dx\mu(x) + \dsum\limits_k \dint_{I_k^+} (x-T(x))\dx\mu(x) \nonumber \\
 & = \dsum\limits_j \left(\dint y\dx\nu_j^-(y) - \dint x\dx\mu_j^-(x)\right) + \dsum\limits_k \left(\dint x \dx\mu_k^+(x) - \dint y \dx\nu_k^+(y)\right). \label{doubleveun} \end{align}
Let now $\gamma$ be a transport plan from $\mu$ to $\nu$ satisfying conditions (a)-(c) of Proposition~\ref{1DOptPlans}. Then we have
\begin{multline*} \int |y-x|\dx\gamma(x,y) = \int_{A\times\R}|y-x|\dx\gamma(x,y) \\ + \sum\limits_j\int_{I_j^-\times\R}|y-x|\dx\gamma(x,y) + \sum\limits_k\int_{I_k^+\times\R} |y-x|\dx\gamma(x,y). \end{multline*}
From the property \ref{impl1}, we immediately deduce that the first integral in the sum above is zero, and from \ref{impl2} and \ref{impl3}, we infer that, for any $j,k$,
$$\int_{I_j^-\times\R}|y-x|\dx\gamma(x,y) = \int_{I_j^-\times I_j^-} (x-y) \dx\gamma(x,y) = \int x \dx\mu_j^- - \int y \dx\nu_j^- $$
$$ \text{and} \qquad \int_{I_k^+\times\R}|y-x|\dx\gamma(x,y) = \int_{I_k^+\times I_k^+} (y-x) \dx\gamma(x,y) = \int y\dx\nu_k^+ - \int x \dx\mu_k^+ .  $$
Adding on all the indexes $j,k$ and using \eqref{doubleveun} leads to $\int|y-x|\dx\gamma(x,y) = W_1(\mu,\nu)$, so that $\gamma$ is optimal, as required.

\section{$\Gamma$-convergence result}

\subsection{Statement of the main theorem} Having at hand the notation of the previous section and of Proposition~\ref{1DOptPlans}, we can now state the main result of this paper.

\begin{theorem} \label{maintheo} Let $\mu,\nu \in \mathcal{P}(\R)$ be two probability measures with bounded support such that $\Ent(\mu | \mathcal{L}^1 ) , \Ent(\nu | \mathcal{L}^1) < \infty$; in particular they are absolutely continuous with respect to the Lebesgue measure. Assume that the following assumptions are true.
\begin{enumerate}[label={\normalfont (H\arabic*)}]
\item The set $A\cap\supp\mu$ has a Lebesgue-negligible boundary, and its interior has the form
$$ \operatorname{int}(A\cap\supp\mu) = \bigcup\limits_i (a_i,b_i) $$
where the intervals $(a_i,b_i)$ are pairwise disjoint and satisfy 
\begin{equation} -\sum\limits_i \int_{a_i}^{b_i} \log \bigl( \min \{x-a_i, b_i-x\} \bigr) \dx  \mu(x) < +\infty \label{ConditionLlogL}. \end{equation}
This assumption is in particular satisfied in the two following cases:
\begin{enumerate}[label={\normalfont (H1\alph*)}]
\item $\mu$ has bounded density on $A$ and $\sum_i |a_i-b_i| \log |a_i-b_i| < + \infty$, or
\item we have $\sum_i |a_i-b_i|^{1-\delta} < + \infty$ for some $\delta>0$.
\end{enumerate}
\item There exists an optimal plan $\gamma$ for the Monge problem such that $\gamma\res(\R\setminus A)^2$ has finite entropy with respect to $\mu\otimes\nu$.
\end{enumerate}
Then the family $(F_\eps)_\eps$ of functionals defined on the set $\Pi(\mu,\nu)$ by
$$ F_\eps(\gamma) = \frac{1}{\eps} \left( \int |y-x|\dx\gamma-W_1(\mu,\nu)\right) + \operatorname{Ent}(\gamma|\mu\otimes\nu)-|\log(2\eps)|\mu(A)  $$
is $\Gamma$-converging, as $\eps\to 0$, to the functional $F$ which is finite only on the set $\O_1(\mu,\nu)$ of optimal plans for the Monge problem, and is then equal to
$$ F(\gamma) =  \Ent(\gamma\res (\R\setminus A)^2 | \mu\otimes\nu) - \int_A \log \left( \frac{\text{\normalfont d}\mu}{\text{\normalfont d}\leb^1} \right)\dx\mu .$$
\end{theorem}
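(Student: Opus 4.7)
The plan is built on Proposition~\ref{1DOptPlans} and the Kantorovich potential $u$ from Lemma~\ref{potential}. Setting $e(x,y):=|y-x|-(u(y)-u(x))\geq 0$, which vanishes on all pairs allowed by Proposition~\ref{1DOptPlans} and satisfies $e(x,y)=|y-x|$ on $A\times A$, one has for every $\gamma\in\Pi(\mu,\nu)$
$$ F_\eps(\gamma)=\frac{1}{\eps}\int e\dx\gamma+\Ent(\gamma|\mu\otimes\nu)-\mu(A)|\log(2\eps)|. $$
Combined with the exact rewriting $\frac{1}{\eps}\int|y-x|\dx\gamma+\Ent(\gamma|\mu\otimes\nu)=\Ent\bigl(\gamma\,|\,\e^{-|y-x|/\eps}\mu\otimes\nu\bigr)$, this identity is the starting point of both inequalities.

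For the $\Gamma$-limsup, fix $\gamma\in\O_1(\mu,\nu)$ with $F(\gamma)<\infty$ and decompose $\gamma=(\id,\id)_\#(\mu\res A)+\gamma^g$ via Proposition~\ref{1DOptPlans}. The good part $\gamma^g$ has finite entropy (since $F(\gamma)<\infty$) and I keep it unchanged; on each component $(a_i,b_i)$ of $\operatorname{int}(A\cap\supp\mu)$ I replace the diagonal piece by a Laplace-kernel plan $\gamma_\eps^{(i)}$ whose density with respect to $\leb\otimes\leb$ is, up to a small endpoint correction needed to recover the marginals, proportional to $\frac{\text{d}\mu}{\text{d}\leb}(x)\,\e^{-|y-x|/\eps}/(2\eps)$. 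A direct expansion then gives
$$ \frac{1}{\eps}\int_{(a_i,b_i)^2}\!|y-x|\dx\gamma_\eps^{(i)}+\Ent\bigl(\gamma_\eps^{(i)}\,\bigl|\,(\mu\otimes\nu)\res(a_i,b_i)^2\bigr)-\mu((a_i,b_i))|\log(2\eps)|\longrightarrow -\int_{(a_i,b_i)}\!\log\frac{\text{d}\mu}{\text{d}\leb}\dx\mu, $$
because the Laplace density is precisely the minimizer of $\phi\mapsto\int|t|\phi\dx t+\int\phi\log\phi\dx t$ over probability densities on $\R$ and the resulting $\log 2$ correction exactly absorbs the $2$ in $|\log(2\eps)|$. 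Summation over the (possibly infinitely many) intervals $(a_i,b_i)$ is justified by~\eqref{ConditionLlogL} in~(H1), which bounds the endpoint-correction error on each $(a_i,b_i)$ by $\int_{a_i}^{b_i}|\log\min(x-a_i,b_i-x)|\dx\mu$.

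For the $\Gamma$-liminf, take $\gamma_\eps\to\gamma$ in $\Pi(\mu,\nu)$ with $\liminf F_\eps(\gamma_\eps)<\infty$. If $\gamma\notin\O_1(\mu,\nu)$, the continuous $e$ has $\int e\dx\gamma>0$, so $\int e\dx\gamma_\eps\geq c>0$ for $\eps$ small; using $\Ent\geq 0$, the term $\frac{1}{\eps}\int e\dx\gamma_\eps$ dominates $\mu(A)|\log(2\eps)|$ and $\liminf F_\eps=+\infty$. Otherwise $\gamma\in\O_1(\mu,\nu)$; I partition $\R^2$ into $E_0:=(\R\setminus A)^2$, the squares $E_i:=(a_i,b_i)^2$ and a cross-region $R$ where $e>0$. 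Lower semicontinuity of the entropy handles $E_0$; on each $E_i$ the cost-entropy identity combined with the Jensen inequality $\Ent(\alpha|\beta)\geq\alpha(E)\log(\alpha(E)/\beta(E))$ yields
$$ \frac{1}{\eps}\int_{E_i}|y-x|\dx\gamma_\eps+\Ent\bigl(\gamma_\eps\res E_i\,\bigl|\,(\mu\otimes\nu)\res E_i\bigr)\geq\gamma_\eps(E_i)\log\gamma_\eps(E_i)-\gamma_\eps(E_i)\log M_\eps^{(i)}, $$
with $M_\eps^{(i)}:=\int_{E_i}\e^{-|y-x|/\eps}\dx(\mu\otimes\nu)$. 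The Laplace asymptotics $M_\eps^{(i)}=2\eps\int_{(a_i,b_i)}(\frac{\text{d}\mu}{\text{d}\leb})^2\dx\leb+\smallo(\eps)$, together with $\gamma_\eps(E_i)\to\mu((a_i,b_i))$ and a bound $\gamma_\eps(R)\leq C\eps|\log\eps|$ coming from $e>0$ on $R$ and the a priori bound on $F_\eps(\gamma_\eps)$, then let the $|\log(2\eps)|$ pieces cancel $\mu(A)|\log(2\eps)|$ and produce, via a discrete log-sum argument, the residual $-\int_A\log\frac{\text{d}\mu}{\text{d}\leb}\dx\mu$ in the limit.

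The main obstacle is making the Laplace asymptotic for $M_\eps^{(i)}$, the endpoint corrections of the limsup construction, and the discrete log-sum bound uniform enough to handle a possibly infinite family $(a_i,b_i)_i$ simultaneously: this is exactly the role of the integrability condition~\eqref{ConditionLlogL} in~(H1), which is what allows one to sum the per-interval estimates absolutely. Hypothesis~(H2), by contrast, is used only in the limsup step to guarantee the existence of at least one $\gamma$ with $F(\gamma)<\infty$, without which the target functional would be identically $+\infty$ and the statement vacuous.
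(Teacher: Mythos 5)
Your starting identity, the inequality test $e(x,y)=|y-x|-(u(y)-u(x))\geq 0$, and the decomposition separating the $A$-part from the good part are all in line with the paper, and the non-optimal case of the $\Gamma$-liminf is handled the same way. But there are two genuine gaps.

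First, in the $\Gamma$-liminf step your per-square Jensen inequality is strictly too weak. Applying $\Ent(\alpha|\beta)\geq\alpha(E)\log\bigl(\alpha(E)/\beta(E)\bigr)$ globally on each square $E_i=(a_i,b_i)^2$, and using the Laplace asymptotics $M_\eps^{(i)}\approx 2\eps\int_{I_i}\mu_A^2$ with $\gamma_\eps(E_i)\to\mu(I_i)$, produces in the limit
\[
\sum_i\mu(I_i)\log\mu(I_i)+\mu(A)|\log(2\eps)|-\sum_i\mu(I_i)\log\!\int_{I_i}\mu_A^2,
\]
and a second application of Jensen (concavity of $\log$) shows that
\[
\mu(I_i)\log\mu(I_i)-\mu(I_i)\log\!\int_{I_i}\mu_A^2\;\leq\;-\int_{I_i}\mu_A\log\mu_A\dx x,
\]
with equality only when $\mu_A$ is constant on $I_i$. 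So the lower bound you obtain is $\leq F(\gamma)$, not $\geq F(\gamma)$; the per-square Jensen throws away exactly the pointwise information you need. The paper avoids this by applying Jensen slicewise: for each fixed $x\in A$ it applies the inequality to the one-dimensional density $y\mapsto\gamma^A_\eps(x,y)$ with cost $c_x(y)=e(x,y)$, which produces the sharp term $\eps\mu_A(x)\log\mu_A(x)$ before integrating in $x$. No discrete log-sum argument can recover what is lost by first aggregating over the square.

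Second, your attribution of (H1) is off. You use \eqref{ConditionLlogL} to control an endpoint correction in the $\Gamma$-limsup and to make the Laplace asymptotics uniform over the intervals $(a_i,b_i)$. In the paper, (H1) enters only in the $\Gamma$-liminf: after the slicewise Jensen, one must pass to the limit in $-\int_A\log\mathcal I_\eps(x)\dx\mu(x)$, and (H1) is precisely the integrability condition ($\int_A|\log l(x)|\dx\mu<\infty$, with $l(x)=\min(x-a_i,b_i-x)$) that lets one dominate $-\log\mathcal I_\eps(x)$ and invoke dominated convergence. The $\Gamma$-limsup construction in the paper does not need (H1) at all: rather than an asymmetric Laplace kernel on each $(a_i,b_i)$, it uses the symmetric density $\min(\mu_A(x),\mu_A(y))\e^{-|y-x|/\eps}/(2\eps)$, whose marginals are dominated by $\mu_A$, and then fills the mass deficit with a block approximation on a grid of squares of side $\eps$ (not on the intervals $(a_i,b_i)$), and shows the extra pieces contribute $\smallo(1)$. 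Your asymmetric kernel $\mu_A(x)\e^{-|y-x|/\eps}/(2\eps)$ is not a transport plan between copies of $\mu\res A$ to begin with, and fixing the marginal is not a boundary correction near $a_i,b_i$ but a global one; the $\min$-symmetrization plus block complement is the mechanism that actually makes this work.
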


The main qualitative consequence of this result is that the minimizer which is selected by the entropic regularization is exactly the most diffuse one on the zone where an optimal transport plan may have a density, if such a plan having also finite entropy exists (which is the sense of the assumption (H2)). Indeed, Proposition~\ref{1DOptPlans} implies that if the set $A$ has positive mass for $\mu$, all the elements of $\O_1(\mu,\nu)$ have infinite entropy, since their restriction to $A\times A$ is concentrated on the graph on the one-dimensional function $T$. On the contrary, it is natural to guess that there should exist two-dimensional densities on the squares $(I_j^-)^2$, $(I_k^+)^2$ which respect the sign of $T-\id$: selecting the most diffuse such density (that is, the one which minimizes the entropy with respect to $\mu\otimes\mu$ on these squares, provided that such a density exists), we obtain exactly the minimizer of the target functionals $F$. Since, for fixed $\eps>0$, the functional $F_\eps$ is only a rescaling version of the original functional $J_\eps$, it admits exactly the same minimizers, which are therefore converging to the (only) minimizer of $F$.

As we said in the introduction, we may also notice that Theorem~\ref{maintheo} allows us to know precisely the asymptotic behavior of the minimal value of the penalized problem: precisely, as $\eps \to 0$,
$$ \min J_\eps = W_1(\mu,\nu) + \eps|\log(2\eps)|\mu(A) + \eps \min F + o(\eps). $$
The excess of order~$|\log\eps|$ is a common phenomenon with another type of approximation of the Monge problem where the regularization impacts the transport map itself; see~\cite{DepLouSan}. Moreover, this was also the order of convergence of the construction proposed by Carlier~et al.~in~\cite{CarDuvPeySch} (``block approximation'') to prove the density, in the set of transport plans, of transport plans having finite entropy (see Proposition~2.14 therein).
\smallskip

We postpone to the last section an explicit representation formula for the minimizer of the entropy restricted to the squares $(I_j^-)^2$, $(I_k^+)^2$ and in particular a necessary and sufficient condition for the assumption~(H2). Concerning the assumption~(H1), it is essentially needed for technical reasons; we do not claim it to be necessary for the $\Gamma$-convergence result, but unfortunately we were not able to conclude the lower limit estimate without it (see below the proof in subsection~\ref{ParLiminf}).

\subsection{$\Gamma$-limsup inequality}

In this section, we prove the $\Gamma$-limsup inequality of Theorem~\ref{maintheo}, that is, for any fixed $\gamma \in \Pi(\mu,\nu)$, building a family $(\gamma_\eps)_\eps$ in $\Pi(\mu,\nu)$ having $\gamma$ as limit and such that
$$ \limsup\limits_{\eps \to 0} F_\eps(\gamma_\eps) \leq F(\gamma).  $$
We then select $\gamma \in \Pi(\mu,\nu)$ such that $F(\gamma)<+\infty$; otherwise there is nothing to prove. therefore,~$\gamma$ satisfies the conditions \ref{impl1}-\ref{impl3} and the assumption~(H2) of Proposition~\ref{1DOptPlans}. Let $\eps>0$ be fixed. We will define separately two densities whose sum will be our approximating transport plan. \smallskip

Our first transport plan is simply $\gamma_\eps^1 := \gamma\res{(\R\setminus A)^2}$. Thanks to the result of Proposition~\ref{1DOptPlans}, it is exactly the part of $\gamma$ where there is some displacement, so that its Monge cost has value $W_1(\mu,\nu)$. \smallskip

In order to get a transport plan $\gamma_\eps$ from $\mu$ to $\nu$, we have to build a ``complementary transport plan'' $\gamma_\eps^2 \in \Pi(\mu\res A,\mu\res A)$ and to set $\gamma_\eps = \gamma^1_\eps+\gamma^2_\eps$. Let us now see what the requirements are so that the family $(\gamma_\eps)_\eps$ that we will obtain satisfies the $\Gamma$-limsup condition. Assume then that $\gamma_\eps^2 \in \Pi(\mu\res A,\mu\res A)$ is given: then, $\gamma_\eps$ connects $\mu$ to $\nu$ and we have on one hand
\begin{equation} \int |y-x|\dx\gamma_\eps(x,y) = \int|y-x|\dx\gamma_\eps^2(x,y) + W_1(\mu,\nu) \label{mongegammaeps} \end{equation}
and on the other, since $\gamma_\eps^1$ and $\gamma_\eps^2$ have disjoint supports,
\begin{equation} \Ent(\gamma_\eps|\mu\otimes\nu) = \Ent(\gamma_\eps^2|\mu\otimes\nu) + \Ent(\gamma_\eps^1 |\mu\otimes\nu) .\label{entropygammaeps} \end{equation}
Plugging back the equalities \eqref{mongegammaeps} and \eqref{entropygammaeps} into the expression of~$F_\eps$, we can see that the measure~$\gamma_\eps^2$ that we want to build must be a transport plan from $\mu\res A$ to itself which satisfies
\begin{equation} \frac{1}{\eps} \int |y-x|\dx\gamma_\eps^2(x,y) + \Ent(\gamma_\eps^2 | \mu\otimes\nu) \leq \mu(A)|\log(2\eps)| - \int_A \log\frac{\text{d}\mu}{\text{d}\leb^1} \dx\mu + \smallo(1).   \label{BigEstimate} \end{equation}
From now on, we will denote by $\mu_A$ the density of the measure $\mu\res A$; with a slight abuse of notation, we will also use, when required, the same symbols for positive measure on $\R^2$ which are absolutely continuous with respect to the Lebesgue measure and, in that case, their densities (notice that by ``density'' we mean ``density with respect to~$\leb^2$'' and not to $\mu\otimes\nu$). \medskip

{\it \underline{Step I:} the definition of $\gamma_\eps^2$.} The strategy would be easy if we knew exactly the expression of the transport plan which minimizes, for fixed $\eps>0$, the energy $\int|y-x|\dx\gamma + \eps\Ent(\gamma|\mu\otimes\nu)$ on the set $\Pi(\mu_A,\mu_A)$. From the dual formulation of this last problem, it is not hard to see that this plan must have form $a_\eps(x) a_\eps(y) \e^{-|y-x|/\eps}$ for some positive function $a_\eps$; unfortunately, the function $a_\eps$ seems difficult to compute explicitly. Instead, our strategy will be to define directly a first density $\bar\gamma_\eps$ which looks like $a_\eps(x)a_\eps(y)\e^{-|y-x|/\eps}$, and whose marginals are almost $\mu\res A$ and are {\it dominated} by $\mu\res A$; then, we will build another ``complement'' $\tilde\gamma_\eps$ between the remaining part of the marginals.

Precisely, the first density that we define is given by
$$ \bar\gamma_\eps(x,y) := \min(\mu_A(x),\mu_A(y)) \frac{\e^{-|y-x|/\eps}}{2\eps}.$$
We notice that, for $\mu$-a.e.~$x \in A$, we have
\begin{equation} \int_\R \bar\gamma_\eps(x,y)\dx y \leq \mu_A(x) \int_\R \frac{\e^{-|y-x|/\eps}}{2\eps} \dx y = \mu_A(x).  \label{tildemueps} \end{equation}
Let us denote by $\bar\mu_\eps$ the first marginal of $\bar\gamma_\eps$, which coincides with the second one, and let us set $\tilde\mu_\eps := \mu_A-\bar\mu_\eps$: therefore, \eqref{tildemueps} implies that $\tilde\mu_\eps$ is a {\it positive} measure on $\R$. It then remains to build a complementary transport plan $\tilde\gamma_\eps$ belonging to $\Pi(\tilde\mu_\eps,\tilde\mu_\eps)$; then, setting $\gamma_\eps^2 := \bar\gamma_\eps+\tilde\gamma_\eps$ will be enough to get a transport plan from $\mu_A$ to itself.

Our construction of $\tilde\gamma_\eps$ is inspired by the ``block approximation'' in~\cite[Definition~2.9]{CarDuvPeySch} and consists in building a density which is piecewise equal, on small squares, to the multiplication of $\tilde\mu_\eps\otimes\tilde\mu_\eps$ by appropriate constants, keeping in mind that $\tilde\gamma_\eps$ must satisfy a marginal condition. Precisely, we select a family $(Q_i)_i$ of segments recovering the support of $\mu\res A$ (and so of $\tilde\mu_\eps$) and having all length at most $\eps$; notice that thanks to the boundedness of $A$, the number $N_\eps$ of such segments which are needed is bounded by $C/\eps$ for a universal constant $C$. Now we set, for any Borel set $B\subset \R^2$,
$$ \tilde\gamma_\eps(B) = \sum\limits_i \frac{\tilde\mu_\eps\otimes\tilde\mu_\eps(B \cap (Q_i \times Q_i))}{\tilde\mu_\eps(Q_i)}. $$
It is then easy to check that $\tilde\gamma_\eps$ is, as required, a transport plan from $\tilde\mu_\eps$ to itself. In the next steps, we prove that $\bar\gamma_\eps+\tilde\gamma_\eps$ satisfies~\eqref{BigEstimate}. \medskip

{\it \underline{Step II:} the total mass of $\tilde\mu_\eps$ vanishes as $\eps\to 0$.} The result of this step will be proved by a dominated convergence argument: we thus begin by showing that
$$ \text{for } \mu_A\text{-a.e.}~x,\, \tilde\mu_\eps(x) \to 0 \text{ as } \eps\to 0. $$
From the definition of $\bar\mu_\eps$ and $\tilde\mu_\eps$, it is clear that, for $\mu_A$-a.e.~$x$,
$$ 0 \leq \tilde\mu_\eps(x) \leq \int_{y \in \R} |\mu_A(y)-\mu_A(x)| \frac{\e^{-|y-x|/\eps}}{2\eps} \dx y. $$
Denoting by $\alpha_\eps(r) := \frac{r\e^{-r/\eps}}{\eps^2}$, it follows that
\begin{align*} \int_{y \in \R} |\mu_A(y)-\mu_A(x)| \frac{\e^{-|y-x|/\eps}}{2\eps} \dx y & = \int_{y\in \R} \left(\int_{r=|y-x|}^{+\infty} \frac{\alpha_{\eps}(r)}{2r}\dx r\right) |\mu_A(y)-\mu_A(x)| \dx y \\
  & = \int_{r=0}^{+\infty} \left(\frac{1}{2r} \int_{x-r}^{x+r} |\mu_A(y)-\mu_A(x)|\dx y\right) \alpha_\eps(r)\dx r \end{align*}
by the Fubini theorem. Now we notice that, as a measure, $\alpha_\eps$ weakly converges to the Dirac mass~$\delta_0$; on the other hand, we have
$$ \lim\limits_{r \to  0} \left(\frac{1}{2r} \int_{x-r}^{x+r} |\mu_A(y)-\mu_A(x)|\dx y\right) = 0$$
as soon as $x$ is a Lebesgue point of $\mu_A$. This proves that, for $\mu_A$-a.e.~$x\in\R$, the pointwise convergence of densities  $\tilde\mu_\eps(x) \to 0$. The domination assumption can then easily be checked, concluding the proof of this step. \medskip

Before passing to the next steps, let us decompose the energy we are interested in in several terms, namely,
$$ \begin{array}{ll} \dfrac{1}{\eps} \dint |y-x|\dx\gamma_\eps^2(x,y) + \Ent(\gamma_\eps^2|\mu\otimes\nu) &  \\[4mm]
  \hspace{1.7cm} = -2\dint \mu_A \log \mu_A {\dx x} \\[4mm]
  \hspace{2cm} + \dfrac{1}{\eps} \dint |y-x| \dx\bar\gamma_\eps(x,y) + \dint \log(\bar\gamma_\eps) \dx\bar\gamma_\eps & \text{(I)} \\[4mm]
  \hspace{2cm}+ \dfrac{1}{\eps} \dint |y-x|\dx\tilde\gamma_\eps(x,y) & \text{(II)} \\[4mm]
  \hspace{2cm}+ \dint \log\left(1+\frac{\tilde\gamma_\eps}{\bar\gamma_\eps}\right) \dx\bar\gamma_\eps & \text{(III)} \\[4mm]
  \hspace{2cm}+ \dint \log(\tilde\gamma_\eps+\bar\gamma_\eps)\dx\tilde\gamma_\eps & \text{(IV)}.
  \end{array} $$
We will estimate successively the terms (I), (II), (III) and (IV). \medskip

{\it \underline{Step III:} estimates on the terms} (I){\it,} (II){\it, and} (III).  Starting from the definition of $\bar\gamma_\eps(x,y)$, we have for any $(x,y)\in\R^2$
$$ \log\bar\gamma_\eps(x,y) = \log \min(\mu_A(x),\mu_A(y)) -\log(2\eps)-\frac{1}{\eps}|y-x|.$$
Integrating with respect to $\bar\gamma_\eps$ and coming back on the expression of (I) leads to
\begin{equation} \label{FirstEstimI} \text{(I)} = -\log(2\eps) \bar\gamma_\eps(\R^2) + \int \log\min(\mu_A(x),\mu_A(y)) \min(\mu_A(x),\mu_A(y)) \frac{\e^{-|y-x|/\eps}}{2\eps} \dx x \dx y. \end{equation}
Denoting by
$$f_A(x,y) := \min\{\mu_A(x),\mu_A(y)\} \log(\min \{ \mu_A(x),\mu_A(y)\}) $$
$$ \text{and} \qquad g(x)=\mu_A (x) \log ( \mu_A(x)),$$
we claim that
\begin{equation} \label{falogfa} \iint f_A(x,y) \frac{\e^{-|y-x|/\eps}}{2\eps} \dx x \dx y \xrightarrow[\eps\to 0]{} \int_\R g(x)\dx x. \end{equation}
This can be proved exactly in the same way as the result of Step~II; more precisely, we notice that we have $|f_A(x,y)-g(x)| \leq |g(y) - g(x)|$, since $f_A(x,y)$ is either $g(x)$ or $g(y)$. In particular we have
$$ \int_{y \in \R} |f_A(x,y)-g(x)| \frac{\e^{-|y-x|/\eps}}{2\eps} \dx y \leq \int_{r>0} \left( \frac{1}{2r} \int_{x-r}^{x+r} |g(y)-g(x)| \dx y\right) \alpha_\eps(r)\dx r $$
with the same kernel $\alpha_\eps$ as above. It can be checked that the mean value in $r$ goes to zero for a.e.~$x$ thanks to the fact that $g \in L^1$, and since $(\alpha_\eps)_\eps$ has $\delta_0$ as limit in the weak sense of measures, this proves~\eqref{falogfa}. 

By combining~\eqref{FirstEstimI} and~\eqref{falogfa}, we conclude
\begin{equation} \text{(I)} = \bar\gamma_\eps(\R^2)\cdot|\log(2\eps)| + \int \log\mu_A \dx\mu_A + \smallo(1). \label{EstimateI} \end{equation}
\medskip

We now pass to the terms (II) and (III), which are the two easiest terms. First, since $\tilde\gamma_\eps$ is supported in the union of the squares $Q_i\times Q_i$ which have length at most $\eps$, we have $|y-x|\leq \sqrt{2}\eps$ for $\tilde\gamma_\eps$-a.e.~$(x,y)$, so that
$$ 0 \leq \text{(II)} \leq \frac{1}{\eps} \cdot \tilde\gamma_\eps(\R^2) \cdot \sqrt{2}\eps = \sqrt{2} \tilde\mu_\eps(\R) \to 0  $$
since, thanks to the result of Step~II, we have $\tilde\mu_\eps(\R)\to 0$ as $\eps \to 0$.

On the other hand, it also holds by the concavity of the logarithm
$$ 0 \leq \text{(III)} = \int \log\left(1+\frac{\tilde\gamma_\eps}{\bar\gamma_\eps}\right)\dx\bar\gamma_\eps  \leq \int \frac{\tilde\gamma_\eps}{\bar\gamma_\eps} \dx\bar\gamma_\eps  = \tilde\gamma_\eps(\R^2), $$
which is again equal to $\tilde\mu_\eps(\R)$ and therefore vanishes as $\eps \to 0$. The two terms (II) and (III) have therefore zero as limit as $\eps \to 0$. \medskip

{\it \underline{Step IV:} estimate on the term} (IV). This one is the most difficult. Recall that, from the definition of $\bar\gamma_\eps$ and $\tilde\gamma_\eps$, we have for any $(x,y)\in\R^2$
$$ \tilde\gamma_\eps(x,y) = \frac{\tilde\mu_\eps(x)\tilde\mu_\eps(y)}{\tilde\mu_\eps(Q_i)} \leq \frac{\mu_A(x) \mu_A (y)}{\tilde\mu_\eps(Q_i)} $$
$$ \text{and} \quad \bar\gamma_\eps(x,y) = \min(\mu_A(x),\mu_A(y))\frac{\e^{-|y-x|/\eps}}{2\eps} \leq \frac{\mu_A(x)}{2\eps}. $$
It immediately follows that
$$ \text{(IV)} \leq \sum\limits_i \int_{Q_i^2} \log\left(\frac{\mu_A(x) \mu_A(y)}{\tilde\mu_\eps(Q_i)}+\frac{\mu_A(x)}{2\eps}\right)\dx  \tilde{\gamma}_{\eps} (x,y). $$
We separate this sum into two terms, depending if $\tilde{\mu}_\eps(Q_i)$ is larger or smaller than $2\eps$; therefore, we denote by $I_\sm$ the set of indexes $i$ such that $\tilde\mu_\eps(Q_i) \leq 2\eps$, by $I_\la$ its complementary set, and also set
$$ A_\sm := \bigcup\limits_{i\in I_\sm} Q_i \qquad\text{and}\qquad A_\la := \bigcup\limits_{i\in I_\la} Q_i. $$
Denoting by (IV.a) the part of the sum above whose indexes belong to $I_\sm$, we have therefore
\begin{align*} \text{(IV.a)} & \leq \sum\limits_{i \in I_\sm} \int_{Q_i^2} \log\left(\frac{\mu_A(x) \mu_A(y)}{\tilde\mu_\eps(Q_i)}+\frac{\mu_A(x)}{\tilde\mu_\eps(Q_i)}\right)\dx  \tilde{\gamma}_{\eps} (x,y) \\
  & \leq \sum\limits_{i \in I_\sm} \int_{Q_i^2} \bigl(\log (\mu_A(x) ) + \log(\mu_A(y)+1) \bigr)  \dx  \tilde{\gamma}_{\eps} -  \tilde\mu_\eps(Q_i) \log\left(\tilde\mu_\eps(Q_i)\right)  \\
  & \leq  2\int_{A_{\sm}} \log( \mu_A (x) +1) \dx  \tilde{\mu}_{\eps}  -  \sum \limits_{i \in I_\sm} \tilde\mu_\eps(Q_i) \log\left(\tilde\mu_\eps(Q_i)\right).
 \end{align*}
Consequently, since $\log( \mu_A (x) +1)$ is  $\mu_A$ integrable and $\mu_A \geq \tilde{\mu}_{\eps} \to 0$ thanks to the results of Step~I, by dominated convergence we get
\begin{equation} \text{(IV.a)} \leq \smallo(1) - \sum\limits_{i\in I_\sm} \tilde\mu_\eps(Q_i)\log\tilde\mu_\eps(Q_i). \label{IVa} \end{equation}
For the second term, denoting by $N_\eps$ the number of indexes in $I_\sm$, the convexity of $t\mapsto t\log t$ gives
\begin{align*} - \frac{1}{N_\eps} \sum\limits_{i\in I_\sm} \tilde\mu_\eps(Q_i)\log\tilde\mu_\eps(Q_i) & \leq -\left(\frac{1}{N_\eps} \sum\limits_{i\in I_\sm} \tilde\mu_\eps(Q_i)\right) \log\left(\frac{1}{N_\eps} \sum\limits_{i\in I_\sm} \tilde\mu_\eps(Q_i)\right)  \\
  & \leq \frac{1}{N_\eps} \tilde\mu_\eps(A_{\sm}) \left(\log N_\eps - \log\tilde\mu_\eps(A_\sm)\right).
\end{align*}
Plugging this inequality into \eqref{IVa} provides
$$ \text{(IV.a)} \leq \tilde\mu_\eps(A_\sm)\log N_\eps + \smallo(1)  $$
as, at $\eps$ goes to $0$, the term $\tilde\mu_\eps(A_\sm)\log(\tilde\mu_\eps(A_\sm))$ vanishes (thus giving a negligible term once we multiply it by $\eps$). Keeping in mind that $N_\eps \leq C/\eps$ for a constant $C$ which does not depend on~$\eps$ and using again the convergence $\tilde\mu_\eps(A_\sm) \to 0$, we infer
\begin{equation} \text{(IV.a)} \leq |\log\eps| \tilde\mu_\eps(A_\sm) + \smallo(\eps).  \label{IVafinal} \end{equation}
On the other hand, recall that we have $\text{(IV)} \leq \text{(IV.a)}+\text{(IV.b)}$ with

\begin{align*}
\text{(IV.b)} &:= \sum\limits_{i \in I_\la} \int_{Q_i} \log\left(\frac{\mu_A(x) \mu_A(y)}{2\eps}+\frac{\mu_A(x)}{2 \eps}\right)\dx  \tilde{\gamma}_{\eps} (x,y) \\
  & \leq \sum\limits_{i \in I_\la} \int_{Q_i^2} \bigl(\log (\mu_A(x) ) + \log(\mu_A(y)+1) \bigr)  \dx  \tilde{\gamma}_{\eps} -  \tilde\mu_\eps(Q_i) \log\left(2 \eps \right)  \\
  & \leq  \int_{A_{\la}} \log( \mu_A (x) +1) \dx  \tilde{\mu}_{\eps}  +  |\log 2 \eps| \tilde\mu_\eps(A_\la) \\
  & =  |\log 2 \eps| \tilde\mu_\eps(A_\la) + \smallo(1)
  \end{align*}
 using again that the  $\mu_A \geq \tilde\mu_\eps \to 0$ and dominated convergence. Putting together this last estimate with \eqref{IVafinal}, we get
$$ \text{(IV)} \leq |\log(2\eps)| \tilde\mu_\eps(\R) + \smallo(1).  $$

Finally, by adding this last estimate to~\eqref{EstimateI} and using that both~(II) and~(III) have zero limit as $\eps \to 0$, we get
\begin{multline*} \frac{1}{\eps} \int |y-x|\dx\gamma_\eps^2(x,y) + \Ent(\gamma_\eps^2 | \mu\otimes\nu) \\ \leq (\bar\mu_\eps(\R)+\tilde\mu_\eps(\R))|\log(2\eps)| - \int \log\mu_A \dx\mu_A + \smallo(1) \\
   \leq  \mu(A)|\log(2\eps)| - \int \mu_A \log\mu_A\dx x + \smallo(1) \end{multline*}
so that \eqref{BigEstimate} is proven.

\subsection{$\Gamma$-liminf inequality} \label{ParLiminf}

Let $(\gamma_\eps)_\eps$ be a family of transport plans having a limit $\gamma$ as $\eps \to 0$ for the weak convergence of measures. First of all, we start by eliminating the case where $\gamma$ is not an optimal transport plan for the Monge problem by noticing that, in that case,
$$ F_\eps(\gamma_\eps) \geq \frac{M}{\eps}-\mu(A)|\log(2\eps)| $$
for some positive constant $M$; thus the $\Gamma$-liminf inequality is obviously satisfied. We now assume that $\gamma \in \O_1(\mu,\nu)$, so that it verifies the statement of Proposition~\ref{1DOptPlans}; we moreover may assume without loss of generality that all the $\gamma_\eps$, for $\eps>0$ small enough, have finite energy $F_\eps$ and consequently that all of them have a density with respect to the two-dimensional Lebesgue measure. Again we will use generally the same notation for the measures $\gamma_\eps$ and their two-dimensional densities with respect to the Lebesgue measure (and not to $\mu\otimes\nu$).

Let $u$ be the Kantorovich potential given by Lemma~\ref{potential}. Using the $1$-Lipschitz property of $u$, we may write
\begin{align*} \frac{1}{\eps} \left( \int |y-x|\dx\gamma_\eps(x,y) - W_1(\mu,\nu)\right) & = \frac{1}{\eps} \int \Big(|y-x|-(u(y)-u(x))\Big) \dx\gamma_\eps(x,y) \\
  & \geq \frac{1}{\eps} \int_{A \times \R} \Big(|y-x|-(u(y)-u(x))\Big) \dx\gamma_\eps(x,y).\end{align*}

Let us denote by $\gamma^A_\eps  = \gamma_\eps\res A \times \R$ and $\hat\gamma_\eps = \gamma_\eps-\gamma^A_\eps$. Notice that the first marginal of $\gamma^A_{\eps}$ is precisely $\mu_A$, while the second marginal may be different and we call it $\nu_{\eps}$; notice that $\nu_{\eps} \rightharpoonup \nu \res A = \mu_A$ but, thanks to the fact that $\int  \nu \log \nu< \infty$, we still have $\int \nu_{\eps} \log \nu \to \int_A \mu_A \log \mu_A$.

Since $\hat\gamma_\eps$ and $\gamma^A_\eps$ are concentrated on disjoint sets and have $\gamma_\eps$ as sum, we have
$$ \Ent(\gamma_\eps|\mu\otimes\nu) = \Ent(\hat\gamma_\eps|\mu\otimes\nu)+\Ent(\gamma^A_\eps|\mu\otimes\nu). $$
Moreover, it is clear that $\hat\gamma_\eps$ weakly converges, as $\eps\to 0$, to $\gamma\res (\R\setminus A)^2$; by the lower-semicontinuity property of the entropy functional, it follows that
$$ \liminf\limits_{\eps\to 0} \Ent(\hat\gamma_\eps|\mu\otimes\nu) \geq \Ent(\gamma\res (\R\setminus A)^2|\mu\otimes\nu).  $$
Putting together the above estimates, and using the lower semicontinuity of the entropy, it is clear that the $\Gamma$-liminf inequality we look for reduces~to
\begin{multline*} \iint \Big(|y-x|-(u(y)-u(x))\Big) \dx\gamma^A_\eps(x,y) + \eps \Ent(\gamma_\eps^A|\mu\otimes\nu) \\
\geq \eps\left(\mu(A)|\log(2\eps)|-\int_A \log \Bigl( \frac{\text{d}\mu}{\text{d}\leb^1}\Bigr) \dx\mu \right) + o(\eps),\end{multline*}
which, by noticing that $\Ent(\gamma_\eps^A |\mu\otimes\nu) =  \iint \gamma_\eps^A \log\gamma_\eps^A - 2 \int_A \mu_A\log\mu_A + o(1) $, 
is equivalent to
\begin{multline} \iint_{A \times \R} \Big(|y-x|-(u(y)-u(x))\Big) \dx\gamma^A_\eps(x,y) + \eps\iint_{A \times \R} \gamma^A_\eps\log\gamma^A_\eps \\ \geq \eps\left(\mu(A)|\log(2\eps)|+\int_A \mu\log\mu\right) + \smallo(\eps). \label{gammaliminf} \end{multline}
In order to prove \eqref{gammaliminf}, we will make use of the following lemma.

\begin{lemma} \label{jensen} Let $f, c$ be measurable positive functions such that $cf \in L^1(\R)$ and $f \in L\log L(\R)$. Let $\eps > 0$. Then the following inequality holds:
\begin{multline*} \int \big(c(x)f(x) + \eps f(x)\log(f(x))\big)\dx x \geq \eps\left(\left(\int f\right)\log\left(\int f\right) - \left(\int f\right) \log\left(\int \e^{-c/\eps}\right) \right).  \end{multline*} \end{lemma}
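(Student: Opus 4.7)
The plan is to recognize the inequality as the Gibbs variational principle and to deduce it from the nonnegativity of relative entropy, which in turn follows from a single application of Jensen's inequality to the convex function $t\mapsto t\log t$.

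Set $M := \int f\dx x$ and $Z := \int \e^{-c(x)/\eps}\dx x$. First I would dispense with the degenerate cases: if $M=0$ both sides vanish with the convention $0\log 0 = 0$, and if $Z=+\infty$ the right-hand side equals $-\infty$ so the inequality is trivial. Hence I may assume $M\in(0,+\infty)$ and $Z\in(0,+\infty)$, and introduce the probability density $h(x) := \e^{-c(x)/\eps}/Z$, so that $c(x)/\eps = -\log Z - \log h(x)$.

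Second, I would substitute this identity into the left-hand side. After cancelling the $-\eps M \log Z$ terms that appear on both sides and dividing by $\eps$, the desired inequality becomes equivalent to
$$\int f(x) \log\frac{f(x)}{h(x)}\dx x \geq M\log M.$$
Writing $f = M\tilde f$ with $\tilde f$ a probability density, the left-hand side equals $M\log M + M\int \tilde f\log(\tilde f/h)\dx x$, so the statement further reduces to the nonnegativity of the relative entropy
$$\int \tilde f(x)\log\frac{\tilde f(x)}{h(x)}\dx x \geq 0.$$

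Third and finally, this nonnegativity follows from Jensen's inequality applied to the probability measure $h\dx x$ and the convex function $\Phi(t) = t\log t$: setting $\phi := \tilde f / h$, one has $\int \phi\, h\dx x = \int \tilde f\dx x = 1$, and
$$\int \Phi(\phi)\, h\dx x \geq \Phi\!\left(\int \phi\, h\dx x\right) = \Phi(1) = 0,$$
which is precisely the required bound.

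The main obstacle is conceptual rather than technical: once one rewrites the cost $c$ in terms of the Gibbs density $h = \e^{-c/\eps}/Z$, the statement collapses to a one-line application of Jensen's inequality. The hypotheses $cf \in L^1(\R)$ and $f \in L\log L(\R)$ ensure that every integral under consideration is well defined and finite, so that the algebraic manipulations above are legitimate.
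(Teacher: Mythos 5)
Your proof is correct and takes essentially the same approach as the paper: both proofs apply Jensen's inequality to the convex function $t\mapsto t\log t$ with respect to the Gibbs probability measure $\e^{-c/\eps}\dx x / \int \e^{-c/\eps}$. The only cosmetic difference is that you first normalize $f$ to a probability density so that the Jensen step yields $\Phi(1)=0$, whereas the paper applies Jensen directly and reads off the full right-hand side without that intermediate reduction.
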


\begin{proof}[Proof of Lemma~\ref{jensen}.] We write the left-hand side as
$$ \eps\times  \left(\int \e^{-c/\eps}\right) \times \frac{1}{\int \e^{-c/\eps}} \int \frac{f(x)}{\e^{-c(x)/\eps}} \log\left(\frac{f(x)}{\e^{-c(x)/\eps}}\right) \e^{-c(x)/\eps}\dx x $$
  and the claimed inequality is then a direct consequence of the Jensen inequality, applied to the convex function $t\mapsto t\log t$ and the probability measure $\dfrac{\e^{-c(x)/\eps}\dx x}{\int \e^{-c/\eps}}$. \end{proof}

Let $\eps>0$ and $x\in A$ be fixed, and let $[a,b]$ be a bounded interval containing the supports of $\mu$ and $\nu$. We apply Lemma~\ref{jensen} to the function $f_x: y\mapsto \gamma_\eps^A (x,y)$ and the function $c$ defined as
$$ c_x(y) = \left( |y-x|-(u(y)-u(x)) \right) \ind_{a\leq y\leq b}. $$
We then obtain
\begin{center}
\begin{tabular}{l}
$\dint_a^b \Big(|y-x|-(u(y)-u(x))\Big) \gamma_\eps(x,y) \dx y + \eps \dint_a^b \log\gamma_\eps(x,y) \gamma_\eps(x,y) \dx y$ \\[4mm]
\hspace{0.2cm} $\geq \eps \left(\dint_a^b \gamma_\eps(x,\cdot) \right) \log \left(\dint_a^b \gamma_\eps(x,\cdot) \right) - \eps \left(\dint_a^b \gamma_\eps(x,\cdot) \dx y\right) \log\left( \dint_a^b \e^{-c_x(y)/\eps} \dx y \right)$ \\[4mm]
\hspace{0.2cm}  $\geq \eps \mu_A(x) \log\mu_A(x) -\eps \mu(x) \log\left( \dint_a^b \e^{-c_x(y)/\eps} \dx y \right). \label{test}$
\end{tabular}
\end{center}
After integrating with respect to $x$,  it remains to prove the following lower estimate:
$$ -\int_A \log \left(\int_a^b \e^{-c_x(y)/\eps} \dx y \right) \dx\mu(x) \geq \mu(A) |\log(2\eps)|+\smallo(1). $$
We notice that, from the definition of the Kantorovich potential $u$, it holds that
$$ \Big||y-x|-(u(y)-u(x))\Big| \leq \leb^1(A\cap[x,y]) $$
for any $x,y \in \R$ (and where the notation $[x,y]$ is used alternatively for $[x,y]$ and $[y,x]$, depending if $x\leq y$ or $y \leq x$). Consequently, it is enough to prove the following:
$$ -\int_A \log \left(\int_a^b \e^{-\leb^1([x,y]\cap A)/\eps} \dx y \right) \dx\mu(x) \geq \mu(A) |\log(2\eps)|+\smallo(1), $$
which can be written, after changing of variable $y = x+\eps t$, as
\begin{equation} \liminf\limits_{\eps \to 0} \int_A -\log\left(\frac{1}{2} \int_{I_\eps(x)} \exp\left(- |t|\cdot \frac{\leb^1([x,x+\eps t]\cap A)}{\eps| t|}\right) \dx t\right) \dx\mu(x) \geq 0\, ,  \label{liminffinal} \end{equation}
where we denote by $I_\eps(x)$ the interval $[-(x-a)/\eps,(b-x)/\eps]$.

We prove~\eqref{liminffinal} by a dominated convergence argument. Denote by
$$ \mathcal{I}_\eps(x) := \frac{1}{2} \int_{I_\eps(x)} \exp\left(- |t|\cdot \frac{\leb^1([x,x+\eps t]\cap A)}{\eps| t|}\right) \dx t. $$
Let $x$ be a fixed Lebesgue point of $A$ for $\mu$ and let $\alpha>0$ be arbitrary small. Let $\eta>0$ be fixed such that
$$ 1-\alpha \leq \frac{\leb^1([x,y]\cap A)}{|y-x|} \leq 1+\alpha $$
for any $y\in [x-\eta,x+\eta]$. Setting $y = x+\eps t$, we obtain after multiplying by $t$, composing with the function $\exp(-\cdot)$ and integrating
\begin{multline} \label{Terme1} \frac{1}{1+\alpha} \left(1-\exp\left(-(1+\alpha)\frac{\eta}{\eps}\right)\right) \leq \frac{1}{2} \int_{-\eta/\eps}^{\eta/\eps} \exp\left(- |t|\cdot \frac{\leb^1([x,x+\eps t]\cap A)}{\eps| t|}\right) \dx t \\
\leq \frac{1}{1-\alpha} \left(1-\exp\left(-(1-\alpha)\frac{\eta}{\eps}\right)\right). \end{multline}
The other part of $\mathcal{I}_\eps(x)$ can be estimated simply by noticing that, for any $t$ such that $|t|\geq \eta/\eps$, it holds that
$$ \leb^1([x,x+\eps t]\cap A) \geq \leb^1([x,x+\eta]\cap A) >0, $$
the last inequality coming from the fact that $x$ is a Lebesgue point of $A$. Therefore,
\begin{equation} \frac{1}{2} \int_{I_\eps(x) \setminus [-\eta/\eps,\eta/\eps]} \e^{-\leb^1([x,x+\eps t]\cap A)} \dx t \leq \frac{b-a}{2\eps} \exp\left(-\frac{\leb^1([x,x+\eta]\cap A)}{\eps}\right). \label{Terme2}  \end{equation}
Using \eqref{Terme1} and \eqref{Terme2} and sending $\eps$ to $0$, we obtain
$$ \frac{1}{1+\alpha} \leq \liminf\limits_{\eps \to 0} \mathcal{I}_\eps(x) \leq \limsup\limits_{\eps \to 0} \mathcal{I_\eps}(x) \leq \frac{1}{1-\alpha}$$
and since $\alpha > 0$ is arbitrary, we conclude that $-\log \mathcal{I}_\eps(x) \to 0$ for $\mu$-a.e.~$x\in A$.

To conclude the proof, we have to control $-\log \mathcal I_\eps(x)$ by an $\mu$-integrable function which does not depend on $\eps$; here our assumption~(H1) on the structure of the set $A$ comes into play. Denote~by
$$ l(x) = \min\Big( (x-a_i), (x-b_i)\Big) \quad \text{for } x \in \operatorname{int}(A\cap\supp\mu),\; a_i < x < b_i. $$
Since $A$ has negligible boundary, the function $l$ is well-defined $\mu$-a.e.~on~$A$. Let now $x$ be a point of the interior of $A$ and $i$ such that $a_i< x < b_i$. We claim that
\begin{equation} \text{for any } y \in I, \; \leb^1([x,y]\cap A)\geq \frac{|y-x|}{(b-a)} l(x). \label{alpha1l} \end{equation}
Indeed, this inequality is clearly true if $y \in (a_i,b_i)$, since this implies $\leb^1([x,y]\cap A) = |y-x|$ and since $l(x) < b-a$; on the other hand, if $y > b_i$ for instance, then $\leb^1([x,y]\cap A) \geq (b_i-x) \geq l(x)$, and since $y$ and $x$ both belong to $[a,b]$, yielding $|y-x|/(b-a) \geq 1$.

Inequality~\eqref{alpha1l} directly implies
$$ \mathcal{I}_\eps(x) \leq \frac{1}{2} \int_{I_\eps(x)} \exp\left(-|t|\frac{l(x)}{b-a}\right) \dx t \leq \frac{b-a}{l(x)} $$
and we also notice, by simply using the inequality $\leb^1([x,x+\eps t] \cap A) \leq \eps t$ for any $x$, that $\mathcal{I}_\eps(x)$ is also always at least equal to $1$, for any $x$. Therefore,
$$ 0 \leq \log \mathcal{I}_\eps(x) \leq \log\left(\frac{b-a}{l(x)}\right). $$
The domination assumption is therefore satisfied as soon as $\int_A |\log l(x)|\dx\mu(x) < +\infty$, as it was stated in (H1). 


It remains to prove that (H1) holds as soon as (H1a) or (H1b) does. Let us write, for each $i$, $[a_i,b_i] = [a_i,\alpha_i]+[\alpha_i,\beta_i]+[\beta_i,b_i]$, where
$$ \alpha_i = \min\left(a_i+1,\frac{a_i+b_i}{2}\right) \quad\text{and}\quad \beta_i = \max\left(b_i-1,\frac{a_i+b_i}{2}\right). $$
We then obtain, by controlling $l(x)$ with $b-a$ on each $[\alpha_,\beta_i]$,
\begin{equation} \begin{array}{rcl} \dint_A |\log l(x)|\dx\mu(x) & = & \dsum\limits_i \dint_{a_i}^{b_i} |\log l(x)| \dx\mu(x) \\[4mm]
 & \leq & \dsum_i \left(-\int_{a_i}^{\alpha_i}\log (x-a_i)\dx\mu(x) -  \int_{\beta_i}^{b_i} \log(b_i-x)\dx\mu(x)\right)  \\[4mm]
 & &+ \dsum_i |\log(b-a)|\mu([a_i,b_i]). \nonumber \end{array} \label{logldmu} \end{equation}
The last term in~\eqref{logldmu} is exactly $\mu(A)|\log(b-a)|$, so it remains to control the two first terms. In case assumption (H1a) is true, we simply notice
\begin{align*} 0  & \leq -\int_{a_i}^{\alpha_i}\log (x-a_i)\dx\mu(x) \\
 & \leq - \|\mu\|\infty (\alpha_i-a_i)\Big(\log(\alpha_i-a_i)-1\Big) \\
 & \leq \|\mu\|\infty \frac{b_i-a_i}{2}\left( 1+\left|\log\frac{b_i-a_i}{2}\right| \right) \end{align*}
and the same bound holds for the term $\int_{\beta_i}^{b_i} |\log (b_i-x)|\dx\mu(x)$ by exactly the same computation, leading to
$$ \int_A |\log l(x)|\dx\mu(x) \leq \|\mu\|\infty \leb^1(A\cap\supp\mu) + |\log(b-a)|\mu(A) + \sum\limits_i (b_i-a_i)|\log(b_i-a_i)|, $$
which is finite by assumption (H1a).

Assume now instead that (H1b) holds. We then use the inequality $\delta tu \leq u \log u + e^{\delta t-1}$ and get
\begin{align*}
 \delta \int_{a_i}^{\alpha_i} |\log (x-a_i)| \mu(x) \dx x & \leq \int_{a_i}^{\alpha_i} e^{-\delta\log(x-a_i)} \dx x + \int_{a_i}^{\alpha_i} \log (\mu(x) ) \mu(x) \dx x \\
 & = \int_{a_i}^{\alpha_i} \frac 1{(x-a_i)^{\delta}} \dx x  + \int_{a_i}^{\alpha_i} \log (\mu(x) ) \mu(x) \dx x \\
 & = \frac { (\alpha_i - a_i)^{1-\delta}}{1-\delta} +  \int_{a_i}^{\alpha_i} \log (\mu(x) ) \mu(x) \dx x.
\end{align*}
Using a similar argument for the intervals $\beta_i, b_i$ and then summing up we get
$$  \delta \int_A |\log l(x)|\dx\mu(x)  \leq  2\int_A \log(\mu)\mu \dx x +| \log (b-a)| \mu(A) + \frac 2{1-\delta} \sum_i (b_i-a_i)^{1-\delta},$$
concluding the proof.

\section{An explicit form of the minimizer} \label{LastSection}

In this section we will try to compute explicitly what plan we are selecting via the limit procedure, that is, the minimizer of $F$. Given the expression of $F$ and the result of Proposition~\ref{1DOptPlans} on the structure of~$\mathcal{O}_1(\mu,\nu)$, it is enough to know, on each maximal interval~$I$ where $F_\mu-F_\nu$ has constant sign, the minimizer of the entropy among all the plans $\gamma$ such that $y-x$ has good sign for $\gamma$-a.e.~pairs $(x,y)$. \smallskip

In particular, for each $j$ (and using the notation of subsection~\ref{suffcond}), this plan will be the minimizer, on the set $\Pi(\mu\res I,\nu\res I)$, of the entropy with respect to the measure $k=\mu^-_j  \otimes \nu^-_j  \cdot \ind_{y > x}$. This problem is well known in the literature (see, for example, \cite{Borwein2}), and the shape of the minimizer is known to be of the form $a(x) \otimes b (y) \cdot  k$. The proof of~\cite{Borwein2} relies on an abstract and general result of~\cite{Borwein1}: to be more precise the usual proof relies on the existence of maximizers in the dual problem if the entropy minimization has a finite value; then, using the duality relation, we can get  that the minimizer has the special shape $a(x) \otimes b (y) \cdot  k$. \smallskip

We will provide a self-contained proof of this result, in a more suitable context for our needs: precisely, we prove the existence of a plan with this special shape, independently of the finiteness of the entropy (see Proposition~\ref{prop:existence} below), which we think is an interesting result itself. It would anyway be interesting to know when there exists a plan of the form $a(x) \otimes b (y) \cdot  k$ still when the entropy minimization problem is not finite, besides our case.

In Proposition~\ref{prop:ineqentr} we then prove that the construction in Proposition~\ref{prop:existence} gives in fact the unique minimizer in the optimization problem, whenever its value is finite; here we in fact pass to the dual problem and we use the dual functions given by the particular shape found in Proposition~\ref{prop:existence}. 

Then, thanks to the explicit construction given in Proposition~\ref{prop:existence} we can state in Theorem~\ref{maintheo2} a necessary and sufficient condition in order to satisfy assumption~(H2).

\begin{proposition}\label{prop:existence} Let $ \mu,\nu$ be two probability measures on $\R$, compactly supported and with no atom, and denote by $\mathcal{F} = F_\mu-F_\nu$ (where $F_\mu$, $F_\nu$ are the cumulative distribution functions of $\mu,\nu$, respectively). Let $I=(a,b)$ be a maximal positivity interval for $\mathcal{F}$, that is, $\mathcal{F}(a)=\mathcal{F}(b)=0$ and $\mathcal{F}>0$ on $I$. Then there exist two nonnegative measures $\rho_1,\rho_2$ on $I$ such that, defining
$$G(x) := \rho_2([x,b]) \quad\text{and}\quad F(y):=\rho_1([a,y])$$
for any $x,y \in I$, we have
$$\begin{cases}
G \cdot \rho_1= \mu\\
F \cdot \rho_2= \nu
\end{cases}  \qquad \text{ on $I$.}$$
Moreover $G$ and $F$ are continuous and positive on $(a,b)$, and the positive measure
$$\gamma_0(x,y) := \rho_1(x) \otimes \rho_2(y) \cdot \ind_{y > x}$$
is a transport plan between $\mu\res I$ and $\nu\res I$.

Similarly, if $I$ is a maximal negativity interval of $\mathcal{F}$, there exist two positive measures $\rho_1,\rho_2$ with densities such that
$$ \left\{
\begin{array}{l}G \cdot \rho_1  = \mu, \\ F\cdot \rho_2 = \nu, \end{array} \right.
\qquad \text{where } F(y) = \rho_1([y,b]), \, G(x) = \rho_2([a,x]), $$
the functions $G$ and $F$ being continuous and positive on $(a,b)$, and
$$ \gamma_0(x,y) := \rho_1(x) \otimes \rho_2(y) \cdot \ind_{y < x} $$
belongs to $\Pi(\mu\res I,\nu\res I)$.
\end{proposition}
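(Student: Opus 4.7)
The plan is to reverse-engineer explicit formulas for $F$ and $G$ from the required relations, verify that these formulas define continuous functions with the right endpoint behavior, and then check the identities directly. Formally, if $\rho_1 = dF$ and $\rho_2 = -dG$ satisfy $G\,dF = d\mu$ and $-F\,dG = d\nu$, then summing gives $d(FG) = d\mu - d\nu = d\mathcal{F}$; combined with $F(a) = 0 = \mathcal{F}(a)$ this forces the algebraic identity $FG = \mathcal{F}$ on $[a,b]$. Substituting $G = \mathcal{F}/F$ back into $G\,dF = d\mu$ yields $d(\log F) = d\mu/\mathcal{F}$, which motivates the ansatz
$$ F(x) := \exp\left(\int_{x_0}^x \frac{d\mu(t)}{\mathcal{F}(t)}\right), \qquad G(x) := \frac{\mathcal{F}(x)}{F(x)}, $$
for an arbitrary reference point $x_0 \in (a,b)$. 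If $\mu(I) = 0$ then $\nu(I) = 0$ by Lemma~\ref{lemtrivial} and the proposition is trivial with $\rho_1 = \rho_2 = 0$; otherwise $x_0$ may be chosen so that $\mu((a,x_0)) > 0$.

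The main technical step is to show that $F(a^+) = 0$, equivalently $\int_a^{x_0} d\mu/\mathcal{F} = +\infty$. I would obtain this from the pointwise bound $\mathcal{F}(t) \leq M(t) := \mu((a,t])$ (immediate from $\mathcal{F}(a) = 0$ and $\nu \geq 0$), which gives
$$ \int_a^{x_0} \frac{d\mu(t)}{\mathcal{F}(t)} \geq \int_a^{x_0} \frac{dM(t)}{M(t)} = \int_0^{M(x_0)} \frac{du}{u} = +\infty, $$
where the change of variables $u = M(t)$ is valid because $\mu$ is atomless. Continuity and positivity of $F$ and $G$ on $(a,b)$ follow because $\mathcal{F}$ is bounded away from zero on every compact sub-interval of $(a,b)$, so that $1/\mathcal{F}$ is bounded and continuous there. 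Finally $G(b^-) = 0$ is immediate from $FG = \mathcal{F}$, $\mathcal{F}(b) = 0$ and $F(b) \geq F(x_0) = 1$.

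With $\rho_1 := dF$ and $\rho_2 := -dG$ interpreted as Stieltjes measures on $I$, the required identities follow from standard calculus on continuous BV functions. The exponential chain rule applied to $d(\log F) = d\mu/\mathcal{F}$ gives $dF = (F/\mathcal{F})\,d\mu$, from which $G\rho_1 = (\mathcal{F}/F)\cdot(F/\mathcal{F})\,d\mu = \mu$ on $I$; in particular $F$ is non-decreasing. The Leibniz rule then yields $d(FG) = G\,dF + F\,dG$, and combining with $FG = \mathcal{F}$ and $G\,dF = d\mu$ we obtain $F\,dG = d\mathcal{F} - d\mu = -d\nu$, so $F\rho_2 = \nu$ and $G$ is non-increasing (as $F > 0$ and $d\nu \geq 0$). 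The marginals of $\gamma_0 := \rho_1\otimes\rho_2 \cdot \ind_{y>x}$ are therefore $G(x)\rho_1 = \mu\res I$ and $F(y)\rho_2 = \nu\res I$ by Fubini, so $\gamma_0 \in \Pi(\mu\res I, \nu\res I)$. The maximal negativity case is perfectly symmetric after interchanging the roles of $F$ and $G$ and replacing $\ind_{y>x}$ by $\ind_{y<x}$.

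The main obstacle I anticipate is the rigorous invocation of the Stieltjes calculus identities (Leibniz rule and exponential chain rule) at the level of continuous BV functions, which is the appropriate generality when $\mu,\nu$ are merely atomless; these are classical facts but deserve some care, particularly where $F$ or $\rho_1$ may blow up or carry infinite mass near $b$. In the absolutely continuous setting relevant to Theorem~\ref{maintheo}, everything reduces to ordinary integrals against densities and the argument streamlines considerably.
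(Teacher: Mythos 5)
Your proposal is correct and follows essentially the same construction as the paper: set $T$ to be a primitive of $d\mu/\mathcal{F}$, define $F = e^T$ and $G = \mathcal{F}/F$, show $F(a^+)=0$ via the comparison $\mathcal{F} \leq \mu((a,\cdot])$, and take $\rho_1 = dF$, $\rho_2 = -dG$ with the BV chain rule supplying $G\rho_1 = \mu$. Your minor variations — the change-of-variables argument for $\int_a^{x_0} d\mu/\mathcal{F} = +\infty$, reading $G(b^-)=0$ directly off $G = \mathcal{F}/F$ with $F \geq 1$, and obtaining $F\rho_2 = \nu$ from the Leibniz rule for $d(FG)$ rather than the symmetric computation — are all valid and, if anything, slightly streamline the paper's argument.
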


\begin{proof} We treat only the case where $I$ is a maximal positivity interval of $\mathcal{F}$ and claim that the proof is similar for their negative counterpart. Denote by
$$ G_\mu : x \mapsto \mu([a,x]) = F_\mu(x)-F_\mu(a) $$
and define the function $G_\nu$ in the same way; in particular, the assumption on $\mathcal{F},a,b$ implies that
$$G_\mu > G_\nu \text{ on $I$ \quad and } G_\mu(a) = G_\nu(a) = 0,\; G_\mu(b) = G_\nu(b) = \mu(I).$$
On the other hand, the function $\mathcal{F}$ is continuous and strictly positive on the whole $I$; we can thus define an integral of $\mu / \mathcal{F}$ on $I$ and call it~$T$. We then observe that $\mu/\mathcal{F} \geq \mu/ G_\mu$  so that we have, for any $x_0,x \in I$ and using that $D \log G_\mu =\mu / G_\mu $,
$$T(x_0)- T(x)=\int_x^{x_0} \frac {1}{\mathcal{F}} \dx  \mu \geq \int_x^{x_0}  \frac {1}{G_\mu} \dx  \mu = \log G_\mu(x_0)  -\log G_\mu(x). $$
In particular, since $G_\mu(a)=0$ we have $T(a)=-\infty$. Let us define
$$F:x\in I \mapsto e^{T(x)} \quad\text{and}\quad G:y\in I\mapsto\frac{\mathcal{F}(y)}{F(y)}.$$
The observations above imply $F(a)=0$. Similarly, $G(b)=0$ and $\log(G)$ is a primitive of $-\nu/\mathcal{F}$. Finally, we define $\rho_1=DF$ and $\rho_2=-DG$ as derivative of $BV_{loc}$ functions (since they are increasing and locally bounded); notice by construction $F$ and $G$ are continuous, so $\rho_1$ and $\rho_2$ will have no atoms. Since $T$ and $F$ are $BV_{loc}$ without jump part, the chain rule holds:
$$ \rho_1=DF = D e^{T}= e^T \cdot DT = F \cdot \frac{ \mu}{\mathcal{F} } = \frac {\mu}{G}.$$
In other words, we have $G\cdot \rho_1 = \mu$ and we can prove the equality $ F \cdot \rho_2=\nu$ in the same way. By construction $F$ and $G$ are continuous and nonnegative; moreover, $F \cdot G = \mathcal{F} >0$ on $I$, so that neither $F$ nor $G$ may vanish on $I$.

Define now $\gamma_0$ as in the statement of Proposition~\ref{prop:existence}: it is then clear that, for any $\phi \in C_b(\R)$,
$$\int_{I \times I} \phi(x) \dx  \gamma_0(x,y) =  \int_I \phi(x) G(x) \dx  \rho_1(x) = \int_I \phi(x) \dx  \mu(x)$$
and that, similarly, the second marginal of $\gamma_0$ is $\nu$.
\end{proof}

The next result proves that the transport plan $\gamma_0$ defined in Proposition~\ref{prop:existence} is actually the minimizer of the entropy among the transport plans on $I^2$ satisfying the corresponding sign constraint.

\begin{proposition}\label{prop:ineqentr} Let $\mu,\nu,\rho_1,\rho_2,I$ and $\gamma_0$ as in Proposition~\ref{prop:existence}. Let $\Gamma_0$ be the set of optimal plans from $\mu\res I$ to $\nu\res I$ (that is, the set of $\gamma \in \Pi(\mu\res I,\nu\res I)$ such that $y-x > 0$ for $\gamma$-a.e.~$(x,y)$). Then we have
\begin{equation}\label{eqn:inequalityentro} 
\min_{\gamma \in \Gamma_0} \Ent ( \gamma | \mu \otimes \nu ) = \Ent (\gamma_0  | \mu \otimes \nu),
\end{equation}
this equality also being true if both sides are $+\infty$. Moreover, whenever $\Ent (\gamma_0 | \mu \otimes \nu ) <\infty$, the only minimizer in~\eqref{eqn:inequalityentro} is $\gamma_0$.
\end{proposition}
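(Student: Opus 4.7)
The approach exploits the explicit ``product-on-its-support'' structure of $\gamma_0$ provided by Proposition~\ref{prop:existence}, via a chain-rule decomposition of the relative entropy with respect to $\mu\otimes\nu$ centered at the reference $\gamma_0$. The first step is to compute the density of $\gamma_0$ with respect to $\mu\otimes\nu$: from $G\rho_1=\mu$ and $F\rho_2=\nu$ on $I$, together with the strict positivity of $F,G$ on $(a,b)$, one obtains
$$h_0(x,y):=\frac{d\gamma_0}{d(\mu\otimes\nu)}(x,y)=\frac{1}{G(x)F(y)}\,\ind_{\{y>x\}\cap I^2}(x,y),$$
which is strictly positive on the whole region where any $\gamma\in\Gamma_0$ is concentrated. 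Consequently every $\gamma\in\Gamma_0$ with $\Ent(\gamma|\mu\otimes\nu)<+\infty$ is automatically absolutely continuous with respect to $\gamma_0$.

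The crucial point is that $\log h_0(x,y)=-\log G(x)-\log F(y)$ is \emph{separable} in $x$ and $y$: by Fubini, and since every $\gamma\in\Gamma_0$ has marginals $\mu\res I$ and $\nu\res I$,
$$\int \log h_0\,d\gamma \;=\; -\int_I \log G\,d\mu-\int_I \log F\,d\nu,$$
an expression which does not depend on $\gamma\in\Gamma_0$ and, specialised to $\gamma_0$ itself, equals $\Ent(\gamma_0|\mu\otimes\nu)$. When $\Ent(\gamma_0|\mu\otimes\nu)<+\infty$, the algebraic identity $\log h = \log(h/h_0)+\log h_0$ (with $h:=d\gamma/d(\mu\otimes\nu)$) integrated against $\gamma$ gives the standard chain rule
$$\Ent(\gamma|\mu\otimes\nu)\;=\;\Ent(\gamma|\gamma_0)\,+\,\int\log h_0\,d\gamma\;=\;\Ent(\gamma|\gamma_0)\,+\,\Ent(\gamma_0|\mu\otimes\nu).$$
Since $\gamma$ and $\gamma_0$ share the common total mass $m=\mu(I)=\nu(I)$ (Lemma~\ref{lemtrivial}), Jensen's inequality applied to the probability measure $\gamma_0/m$ yields $\Ent(\gamma|\gamma_0)\geq 0$, with equality if and only if $\gamma=\gamma_0$ by strict convexity of $t\mapsto t\log t$. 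This settles simultaneously the inequality and the uniqueness claim.

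The main obstacle is the case $\Ent(\gamma_0|\mu\otimes\nu)=+\infty$, in which one of the marginal integrals above diverges and the chain rule carries an $\infty-\infty$ indeterminacy. I would bypass it by passing to the Donsker--Varadhan-type dual inequality applied to the truncated separable potential
$$\phi_N(x,y):=-\log\max\bigl(G(x),e^{-N}\bigr)-\log\max\bigl(F(y),e^{-N}\bigr)$$
on $\{y>x\}\cap I^2$ (and $-\infty$ elsewhere), which is bounded above by $2N$ and satisfies $\phi_N\nearrow \log h_0$ pointwise as $N\to+\infty$. The resulting bound
$$\Ent(\gamma|\mu\otimes\nu)\;\geq\;\int \phi_N\,d\gamma\;-\;m\log\!\left(\frac{1}{m}\iint e^{\phi_N}\,d(\mu\otimes\nu)\right),$$
combined with monotone convergence (the left integral tends to $+\infty$ since $\phi_N\nearrow\log h_0$ while the right one converges to $m$ because $e^{\phi_N}\nearrow h_0$), forces $\Ent(\gamma|\mu\otimes\nu)=+\infty$ for every $\gamma\in\Gamma_0$, so that the ``$+\infty=+\infty$'' equality in~\eqref{eqn:inequalityentro} holds trivially with $\gamma_0$ as a minimizer.
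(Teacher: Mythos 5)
Your proof is correct. It rests on the same key structural fact as the paper's --- that $\log h_0(x,y)=-\log G(x)-\log F(y)$ is separable, so that by the marginal constraints the quantity $\int\log h_0\,\dx\gamma$ is \emph{independent} of $\gamma\in\Gamma_0$ and equals $\Ent(\gamma_0|\mu\otimes\nu)$ --- but you package the consequences differently. In the finite-entropy regime you invoke the chain-rule identity $\Ent(\gamma|\mu\otimes\nu)=\Ent(\gamma|\gamma_0)+\int\log h_0\,\dx\gamma$ and Jensen's inequality for $\Ent(\gamma|\gamma_0)\geq 0$, which is softer and gives the uniqueness claim immediately from the equality case of Jensen; the paper instead applies the pointwise Young inequality $t\log t-ct\geq -e^{c-1}$ with $c=A_n(x)+B_n(y)+1$, exploits the same marginal invariance to replace $\gamma$ by $\gamma_0$ in the linear term, and passes to the limit via Lemma~\ref{lem:convergence}, deriving uniqueness separately from strict convexity of $\Ent$ on the convex set $\Gamma_0$. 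Your treatment of the case $\Ent(\gamma_0|\mu\otimes\nu)=+\infty$ via the Donsker--Varadhan bound with the truncated potential $\phi_N$ is in substance the same truncation-and-limit mechanism as the paper's (their $A_n=\phi_n(-\log G)$, $B_n=\phi_n(-\log F)$ play the role of your $\phi_N$), just formulated through the logarithmic form of the Gibbs variational principle rather than the additive one; indeed that single argument would also cover the finite case, so your chain-rule step is strictly a simplification of the main inequality at the price of a case split. Two small points worth making explicit: the Fubini/Tonelli splitting of $\int\log h_0\,\dx\gamma$ into $-\int\log G\,\dx\mu-\int\log F\,\dx\nu$ is legitimate because both $-\log G$ and $-\log F$ are bounded from \emph{below} (by $-\log\rho_2(I)$ and $-\log\rho_1(I)$ respectively), and your remark on absolute continuity of $\gamma$ with respect to $\gamma_0$ should be accompanied by the observation that when $\gamma\not\ll\mu\otimes\nu$ the inequality is trivially $+\infty\geq\cdot$, so no generality is lost.
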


In the proof of Proposition~\ref{prop:ineqentr} we will make use of the following lemma.

\begin{lemma}\label{lem:convergence} Let $a,b$ two Borel functions defined on an open set $\Omega$ and let us consider a finite measure $\mu$ on $\Omega$. Then, we have that 
$$ \int_{\Omega} (\phi_n(a) + \phi_n(b))_+ \dx  \mu \to \int_{\Omega} (a+b)_+ \dx  \mu, \qquad \int_{\Omega} (\phi_n(a) + \phi_n(b))_- \dx  \mu \to \int_{\Omega} (a+b)_- \dx  \mu, $$
where $\phi_n(t)= \max\{ -n, \min\{ t,n\} \}$. In particular, if either  $(a+b)_-$ or $(a+b)_+$ is in $L^1(\mu)$, we have also
$$\int_{\Omega} (\phi_n(a) + \phi_n(b)) \dx  \mu \to \int_{\Omega} (a+b) \dx  \mu.$$
\end{lemma}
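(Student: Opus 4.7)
The approach is to reduce everything to a clean pointwise domination and then invoke Fatou's lemma together with dominated convergence. The key pointwise claim that I would establish first is
\[
\bigl(\phi_n(a(x)) + \phi_n(b(x))\bigr)_\pm \leq \bigl(a(x) + b(x)\bigr)_\pm \qquad \text{for every } x \in \Omega.
\]
Writing $\phi_n(t) = \sgn(t)\min(|t|, n)$, this is a short case analysis on the signs of $a(x)$ and $b(x)$: the cases $a, b \geq 0$ and $a, b \leq 0$ are immediate (in the latter the left-hand side of the $(\cdot)_+$ inequality is non-positive), and the mixed case $a \geq 0 \geq b$ splits according to whether $a+b$ is non-negative, in which case $\min(a,n) - \min(|b|,n) \leq a - |b| = (a+b)_+$ by the monotonicity of $s \mapsto \min(s,n)$, or negative, in which case $a \leq |b|$ forces $\min(a,n) \leq \min(|b|,n)$, so the left-hand side is non-positive and $(a+b)_+ = 0$. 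The case $b \geq 0 \geq a$ is symmetric.

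Given this pointwise bound, the convergence statements are routine. Since $\phi_n(t) \to t$ pointwise on $\R$, one has $(\phi_n(a) + \phi_n(b))_\pm \to (a + b)_\pm$ pointwise on $\Omega$. If $(a+b)_+ \in L^1(\mu)$, the dominated convergence theorem with dominant function $(a+b)_+$ gives $\int (\phi_n(a) + \phi_n(b))_+ \dx\mu \to \int (a + b)_+ \dx\mu$; otherwise Fatou's lemma applied to the non-negative integrands forces the left-hand integrals to diverge to $+\infty$ as well, matching the right-hand side. The same argument, applied to the negative parts, gives the companion convergence.

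For the signed conclusion, when one of $(a+b)_\pm$ lies in $L^1(\mu)$, the pointwise bound ensures the same for $(\phi_n(a) + \phi_n(b))_\pm$ at every $n$, so both $\int(\phi_n(a) + \phi_n(b))\dx\mu$ and $\int(a+b)\dx\mu$ are unambiguously defined in $\R \cup \{\pm \infty\}$ with no $\infty - \infty$ ambiguity, and writing each integral as the difference of its positive and negative parts and invoking the two convergences already proven yields the claim. The main (and really only) obstacle is the pointwise domination; the rest is classical measure theory, and it is worth noting that finiteness of $\mu$ is never actually used in the argument.
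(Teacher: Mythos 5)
Your proof is correct and follows essentially the same route as the paper: both hinge on the pointwise domination $(\phi_n(a)+\phi_n(b))_\pm \leq (a+b)_\pm$, established by a case analysis on signs, and then combine Fatou's lemma with the domination (you via a case split into dominated convergence vs.\ divergence, the paper via a unified $\limsup$--$\liminf$ sandwich). The only quibble is that the step $\min(a,n)-\min(|b|,n)\leq a-|b|$ for $a\geq|b|$ really uses the $1$-Lipschitz (contraction) property of $s\mapsto\min(s,n)$ rather than mere monotonicity, but the conclusion is certainly right.
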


\begin{proof} First we claim that 
\begin{equation}\label{eqn:phonon} (\phi_n(a) + \phi_n(b))_+ \leq (a+b)_+
\end{equation}
(and similarly $(\phi_n(a) + \phi_n(b))_- \leq (a+b)_-$). 

If $a,b >0$ this is obvious, so let us suppose $a>0$ and $b<0$, with $a+b=k >0$ then $a  = k-b$ and so 
$$\phi_n(a) = \phi_n(k-b) \leq \phi_n(k) + \phi_n(-b)= \phi_n(k) - \phi_n(b) \leq k - \phi_n(b), $$
where we used that $\phi_n$ is odd and it is subadditive on the positive numbers. In order to conclude the proof of~\eqref{eqn:phonon} we need to show that $ \phi_n(a) + \phi_n(b) \leq 0 $ whenever $a+b \leq 0$, but this is obvious from the fact that $\phi_n$ is increasing and odd and in particular if $a +b \leq 0$ we have $a \leq -b$ and so
$$\phi_n( a) + \phi_n ( b) \leq \phi_n( -b) + \phi_n (b) =0.$$

In particular, using~\eqref{eqn:phonon} we immediately get 
$$\limsup_{n \to \infty} \int_{\Omega} (\phi_n(a) +\phi_n(b))_+ \dx  \mu \leq  \int_{\Omega} (a+b)_+ \dx \mu,$$
while, since $\lim_{n \to \infty} (\phi_n(a)+\phi_n(b))_+ = (a+b)_+$, by Fatou's lemma we have
$$\liminf_{n \to \infty} \int_{\Omega}  (\phi_n(a) +\phi_n(b))_+\dx \mu \geq \int_{\Omega} (a+b)_+ \dx \mu,$$
thus giving us that $ \int  (\phi_n(a) +\phi_n(b))_+   \to \int (a+b)_+$. We deal with the negative part in the same way. 
 \end{proof}

\begin{proof}[Proof or Proposition~\ref{prop:ineqentr}.] Again we only will deal with an interval~$I$ where $\mathcal{F}$ is positive. Assume that $\Ent(\cdot|\mu\otimes\nu)$ is not identically $+\infty$ on $\Gamma_0$ and let $\gamma \in \Gamma_0$ having finite entropy; with a slight abuse of notation, we will also denote by $\gamma(x,y)$ the density of the plan $\gamma$ with respect to $\mu \otimes \nu$. Let us denote $\Omega=I \times I  \cap \{ y >x\}$;  we know that for every bounded measurable couple of functions $A, B : I \to \mathbb{R}$, we have
$$ \Ent (\gamma  | \mu \otimes \nu)   = \int_{\Omega}  \bigl( \gamma \log (\gamma)  -  (A(x)+B(y) ) \gamma  - \gamma \bigr) \dx  \mu(x) \dx  \nu (y)  + \int_{\Omega} (A+B+1) \dx  \gamma_0. $$
Now we use that $t \log (t) - c t  \geq  -e^{c-1}$ for any real numbers $c,t$; applying this with $c=A(x)+B(y)+1$ and $t= \gamma(x,y)$, we deduce
\begin{equation} \Ent (\gamma  | \mu \otimes \nu) \geq - \int_{\Omega} e^{ A(x)+B(y)} \dx  \mu \dx  \nu + \int_{\Omega} (A(x) + B(y)+1) \dx \gamma_0 (x,y). \label{TwoTerms} \end{equation}
Let now $\phi_n$ be defined as in Lemma \ref{lem:convergence} and define
$$A_n(x) := \phi_n( -\log ( G(y)) ) \quad\text{and}\quad B_n(y)= \phi_n(-\log (F(y))).$$
Equation~\eqref{TwoTerms} holds in particular with $A_n$ and $B_n$ instead of $A$ and $B$, and we will estimate the two terms of the right-hand side from below by $\Ent(\gamma_0|\mu\otimes\nu)$ as $n\to +\infty$. By Lemma~\ref{lem:convergence} we know that $(A_n(x)+B_n(y))_+ \leq ( -\log G(x) - \log F(y))_+$ and so $e^{A_n(x) + B_n(y) } \leq \max\{ 1/G(x)F(y) ,1\}$. Thus, by dominated convergence, we get
$$\int_{\Omega} e^{A_n+B_n} \dx  \mu \dx  \nu \to \int_{\Omega} \frac 1{GF} \dx  \mu \dx  \nu = \int_{\Omega} \dx \gamma_0.$$
As for the second term of~\eqref{TwoTerms}, it is clear that $(-\log G(x) -\log (F(y))_- \in L^1( \gamma_0)$ since
$$ \int_{\Omega} (-\log(G ) -\log (F))_- \dx  \gamma_0 = \int_{\Omega} \log \Bigl(\frac1{GF} \Bigr)_- \frac 1{GF}   \dx \mu \otimes \nu \leq \int_{\Omega} e^{-1} \dx \mu \otimes \nu < \infty.$$
In particular using again Lemma \ref{lem:convergence} we get
$$\int_{\Omega} (A_n(x)+ B_n(y)  +1) \dx  \gamma_0 \to \int_{\Omega} \left(\log  \Bigl( \frac 1{ F(y) G(x)} \Bigr) +1\right) \dx  \gamma_0 .$$
So, putting all the estimates together and using  $\dfrac { \text{d} \gamma_0 }{ \text{d} (\mu \otimes \nu) } = \dfrac 1{ G(x)F(y)}$, we obtain
\begin{align*}
 \Ent (\gamma\res(I \times I) | \mu\otimes\nu) & \geq - \int_{\Omega} e^{ A_n(x)+B_n(y)} \dx  x \dx y + \int (A_n(x) + B_n(y)+1) \dx \gamma_0 (x,y) \\
 & \to - \int_{\Omega} \dx  \gamma_0  + \int_{\Omega} \left(\log  \Bigl( \frac{ \text{d} \gamma_0 }{ \text{d} \mu\otimes \nu } \Bigr)+1\right)  \dx  \gamma_0  = \Ent (\gamma_0  | \mu \otimes \nu),
 \end{align*}
proving that $\gamma_0$ minimizes $\Ent(\cdot|\mu\otimes\nu)$ on $\Gamma_0$. As for the uniqueness it is sufficient to observe that $\Gamma_0$ is a linear space and $\Ent( \cdot | \mu \otimes \nu)$ is strictly convex.
%
%
%
%
\end{proof}

We can now prove the following result, which expresses exactly when the above assumption (H2) holds depending on the cumulative distribution functions $F_\mu,F_\nu$.

\begin{theorem}\label{maintheo2} Let $\mu,\nu$ be two probability measures on $\R$, compactly supported and absolutely continuous with respect to the Lebesgue measure. Let $\mathcal{F} = F_\mu-F_\nu$, where $F_\mu$, $F_\nu$ are the cumulative distribution functions of $\mu,\nu$, and $A^+$, $A^-$ as in Section~3. Then a necessary and sufficient condition for assumption~(H2) is
$$ -\int_{A^+ \cup A^-} \log |\mathcal{F}(x)|\dx\mu(x) < +\infty  $$
and in that case, the minimal entropy among the elements of $\mathcal{O}_1(\mu,\nu)$ is reached by the plan $\gamma$ which coincides with the identity map on $A\times A$ and, on each square $I^2$ where $I$ is a maximal positivity (resp.,~negativity) interval of $\mathcal{F}$, is equal to the $\gamma_0$ defined by Proposition~\ref{prop:ineqentr}. Moreover, for any such interval $I$, we have
\begin{equation}\label{eqn:explicit}\Ent( \gamma_0\res I^2| \mu \otimes \nu) =  -\int_I \log  |\mathcal{F} (x)|  \dx  \mu(x)  - \mu( I).
\end{equation}
\end{theorem}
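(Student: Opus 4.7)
The plan of proof proceeds in three logical steps: identifying the minimizer, reducing the entropy computation to a single Fubini-type identity, and deducing the equivalence with~(H2) by summation.

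For the first step, Proposition~\ref{1DOptPlans} yields, for any $\gamma\in\mathcal{O}_1(\mu,\nu)$, the disjoint decomposition $\gamma = \gamma\res(A\times A) + \sum_I \gamma\res(I\times I)$, where $I$ ranges over the maximal sign intervals of $\mathcal{F}$ and the first summand is concentrated on the diagonal of $A\times A$, hence singular with respect to $\mu\otimes\nu$. Since the various $I^2$'s are pairwise disjoint, entropies split as $\Ent(\gamma\res(\R\setminus A)^2\,|\,\mu\otimes\nu) = \sum_I \Ent(\gamma\res I^2\,|\,\mu\otimes\nu)$, and Proposition~\ref{prop:ineqentr} identifies the minimizer on each piece (uniquely, when finite) as the plan $\gamma_0^I$ of Proposition~\ref{prop:existence}. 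This already shows that the plan described in the statement realizes the minimum.

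The heart of the proof is formula~\eqref{eqn:explicit}. From $G\rho_1=\mu$ and $F\rho_2=\nu$, the density of $\gamma_0^I$ with respect to $\mu\otimes\nu$ on $\{y>x\}\cap I^2$ is $1/(G(x)F(y))$, and the marginal property immediately yields
\[
\Ent(\gamma_0^I\,|\,\mu\otimes\nu) = -\int_I \log G\,d\mu - \int_I\log F\,d\nu.
\]
Since $FG=\mathcal{F}$ on $I$, substituting $\log G=\log\mathcal{F}-\log F$ reduces~\eqref{eqn:explicit} to the single identity $\int_I\log F\,d(\mu-\nu) = -\mu(I)$. To prove the identity, I fix $x_0\in I$ and write $\log F(x)-\log F(x_0) = \int_{x_0}^x \mu(t)/\mathcal{F}(t)\,dt$, with the natural sign convention for $x<x_0$. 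The constant $\log F(x_0)$ integrates to zero against $d(\mu-\nu)$ because $(\mu-\nu)(I)=\mathcal{F}(b)-\mathcal{F}(a)=0$. A Fubini exchange plus the key identities $(\mu-\nu)((t,b])=-\mathcal{F}(t)$ and $(\mu-\nu)([a,t))=\mathcal{F}(t)$ then makes the factors $\mu/\mathcal{F}$ and $\mathcal{F}$ cancel, giving $\int_I\log F\,d(\mu-\nu) = -\mu(I)$.

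The main technical obstacle lies in legitimizing this Fubini exchange, because $\mu/\mathcal{F}$ is generally not integrable near the endpoints of $I$. The standard workaround is to first perform a classical integration by parts on the truncated interval $[a+\epsilon,b-\epsilon]$, where everything is bounded, and let $\epsilon\to 0$; the matter then reduces to showing that the boundary term $\mathcal{F}(x)\log F(x)$ vanishes as $x\to a^+$ and as $x\to b^-$. I would deduce this from the factorization $\mathcal{F}=FG$, the elementary bound $|t\log t|\le 1/e$ on $(0,1]$ (which takes care of the case when the complementary factor $G$ at $a$, or $F$ at $b$, remains bounded), combined, in the degenerate case where that factor blows up, with the symmetric identity $\mathcal{F}\log F = \mathcal{F}\log\mathcal{F} - \mathcal{F}\log G$ and the fact $\mathcal{F}\log\mathcal{F}\to 0$. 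Finally, summing~\eqref{eqn:explicit} over all maximal sign intervals of $\mathcal{F}$ gives
\[
\sum_I\Ent(\gamma_0^I\,|\,\mu\otimes\nu) = -\int_{A^+\cup A^-}\log|\mathcal{F}|\,d\mu - \mu(A^+\cup A^-).
\]
Each summand is nonnegative: Jensen's inequality applied to $\gamma_0^I$ of mass $\mu(I)$ against $(\mu\otimes\nu)\res I^2$ of mass $\mu(I)^2$ gives $\Ent(\gamma_0^I)\ge -\mu(I)\log\mu(I)\ge 0$. Hence, using also $\mu(A^+\cup A^-)\le 1$, the sum is finite if and only if $-\int_{A^+\cup A^-}\log|\mathcal{F}|\,d\mu<+\infty$; combined with the minimality of Step~1, this is exactly assumption~(H2), concluding the proof.
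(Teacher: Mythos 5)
Your high-level architecture is the same as the paper's: decompose any $\gamma\in\mathcal O_1(\mu,\nu)$ over the maximal sign intervals of $\mathcal F$, apply Propositions~\ref{prop:existence} and \ref{prop:ineqentr} on each interval, prove the per-interval identity~\eqref{eqn:explicit}, and sum (the Jensen argument for $\Ent(\gamma_0^I)\ge 0$ is the right way to justify the summation and the ``iff''). However, your proof of~\eqref{eqn:explicit} has a genuine gap, and it is precisely at the point where the paper's computation is structured differently.

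Your step~2 first splits
\[
\Ent(\gamma_0^I\mid\mu\otimes\nu)=-\int_I\log G\,\mathrm d\mu-\int_I\log F\,\mathrm d\nu,
\]
and then reduces to $\int_I\log F\,\mathrm d(\mu-\nu)=-\mu(I)$. Both of these manipulations risk passing through $\infty-\infty$: near $a$ we have $F\to 0$, and near $b$ we have $G\to 0$ while $F(b^-)$ and $G(a^+)$ may each be $+\infty$, so each of $\int\log G\,\mathrm d\mu$ and $\int\log F\,\mathrm d\nu$ may fail to be well-defined on its own, as may $\int\log F\,\mathrm d(\mu-\nu)$. The theorem explicitly asserts~\eqref{eqn:explicit} even when both sides are infinite, so this is not a cosmetic issue. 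More seriously, your integration by parts on $[a+\epsilon,b-\epsilon]$ produces the boundary term $\mathcal F\log F$ at both endpoints, and your argument that it vanishes in the ``degenerate case'' does not close. You write $\mathcal F\log F=\mathcal F\log\mathcal F-\mathcal F\log G$ and observe $\mathcal F\log\mathcal F\to 0$; but $\mathcal F\log G=FG\log G$ is still of the form $0\cdot\infty$ (at $a$: $F\to 0$, $G\log G\to\infty$; at $b$: $\mathcal F\to 0$, $\log G\to-\infty$), so you have merely transferred the indeterminacy, not resolved it. There is no obvious elementary bound that forces $\mathcal F\log G\to 0$ without using more structure.

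The paper avoids both problems by never splitting the double integral in $x$. It writes $\Ent(\gamma_0)= -\int_0^1\int_x^1\log(G(x)F(y))\,\mathrm d\rho_2(y)\,\mathrm d\rho_1(x)$, splits only the inner logarithm (for each fixed $x$, $\log G(x)$ is a constant), and integrates by parts in $y$ alone against $\mathrm d\rho_2=-\mathrm dDG$. This produces the single boundary term $G(y)\log F(y)$ at $y=b$, which is disposed of cleanly via $F(1/2)\le F(y)\le 1/G(y)$ (a consequence of $\mathcal F<1$) together with $G\log G\to 0$; the lower limit $y=x$ is harmless since $x$ is interior. The remaining Fubini exchange, $\int_0^1\int_x^1\frac{\mathrm d\mu(y)}{F(y)}\,\mathrm d\rho_1(x)=\mu(I)$, has a nonnegative integrand so Tonelli applies without finiteness hypotheses. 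To repair your version you would need either a nontrivial additional lemma showing $\mathcal F\log F\to 0$ at both endpoints (which appears true but is not immediate), or to restructure the integration by parts as the paper does.
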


\begin{proof} Let $I$ be a maximal positivity interval of $\mathcal{F}$ (the set where $\mathcal{F}$ is negative being treated in a similar way). We first notice that, thanks to Proposition \ref{1DOptPlans}(b) we have that $\gamma\res_{I^2}= \gamma\res_{I \times \R}=\gamma\res_{\R \times I}$ and so it is an (optimal) plan between $\mu\res_I$ and $\nu\res_I$. Then we can deduce from Proposition~\ref{prop:ineqentr} that the minimal entropy of $\gamma\res_{I^2}$ among the elements of $\mathcal{O}_1(\mu,\nu)$ is reached by $\gamma_0$, even if it is $+ \infty$; so it is sufficient to prove that equality \eqref{eqn:explicit} holds, even when one of the two sides is not finite. Let us assume, for simplicity, that $I=(0,1)$. Then by definition 
$$ \Ent( \gamma_0 | \mu \otimes \nu )  = - \int_0^1 \int_x^1 \log ( G(x) F(y))  \dx \rho_2 (y) \dx \rho_1 (x).$$
We can split the logarithm in the inner integral since $G(x)$ is constant and use the definition of $F, G$ and the results from Proposition \ref{prop:existence} to get
$$ \Ent( \gamma_0 | \mu \otimes \nu ) = \int_0^1  \left( - \log (G(x)) G(x) +  \int_x^1 \log (F(y)) \dx \rho_2 (y) \right) \dx  \rho_1 (x).$$
Since $-\rho_2 = DG$ we can use integration by parts in order to get
\begin{multline*} \int_x^1 \log ( F(y))\dx  DG(y) \dx y \\ = \Bigl[G(y) \log (F(y)\Bigr]_x^1 - \int_x^1 \frac{G(y) }{F(y)} \dx  DF = - G(x) \log F(x) - \int_x^1 \frac { \text{d} \mu(y)}{F(y)}, \end{multline*}
where we used that $G(y) \log F(y) \to 0$ for $y\to 1$. This is true since for $y$ sufficiently close to $1$ we have $\mathcal{F} (y)<1$ and so in particular $F(1/2) \leq F(y) = \mathcal{F}(y)/G(y) \leq 1/ G(y)$. In particular, using $G(y) \to 0$ as $y \to 1$ we get
$$0 = \lim_{y \to 1} G(y) \log (F(1/2))  \leq  \lim_{y \to 1} G(y) \log (F(y)) \leq  \lim_{y \to 1} - G(y) \log (G(y)) = 0.$$
Now for any nonnegative measure $\eta$ in $(0,1)$ we have that $\int_0^1 f(x) \int_x^1 \frac{ \dx  \eta (y)}{F(y)}  \dx x  =  \eta ( 0,1) $: it is true by integration by parts for any measure compactly supported in $(0,1)$, and then we reach every measure by an approximation argument. We use this to get
\begin{align*} \Ent( \gamma_0 | \mu \otimes \nu) & = \int_0^1 \left( - \log ( G(x)) G(x)  - \log (F(x)) G(x)   - \int_x^1 \frac{ 1}{F(y)} \dx  \mu (y) \right)  \dx  \rho_1 (x)\\
& = - \int_0^1  \log ( \mathcal{F})  \dx   (G \cdot \rho_1) -  \int_0^1 \dx  \mu \\
& = - \int_0^1 \log (\mathcal{F}) \dx  \mu - \mu ( I),
\end{align*}
where we used $F(x) \cdot G(x)= \mathcal{F} (x)$ and in the last passage $G \cdot \rho_1= \mu$, proving the desired equality~\eqref{eqn:explicit}. 
\end{proof}

\thanks{{\bf Acknowledgements.} This work has been mostly developed while the first author was visiting INRIA-Paris, supported by the FSMP Distinguished Professor Program. Both authors wish to thank the anonymous referees for helpful suggestions which allowed to substantially improve several proofs, and in particular for pointing out the references~\cite{Borwein1,Borwein2} and the links with our work.}

\bibliographystyle{acm}
\bibliography{monge1d_revised}

\end{document}